\newtheorem{theorem}{Theorem}[section]
\newtheorem{theo}[theorem]{Theorem}
\newtheorem{prop}[theorem]{Proposition}
\newtheorem{lem}[theorem]{Lemma}
\newtheorem{rem}[theorem]{Remark}
\newtheorem{cor}[theorem]{Corollary}
\newtheorem{exam}[theorem]{Example}
\makeatletter \@addtoreset{equation}{section}
\DeclareMathOperator*{\CT}{CT}
\DeclareMathOperator*{\res}{res}
\newcommand{\NN}{\mathbb{N}}
\newcommand{\ZZ}{\mathbb{Z}}
\newcommand{\QQ}{\mathbb{Q}}
\newcommand{\RR}{\mathbb{R}}
\def\lcm{\mathrm{lcm}}
\def\y{\boldsymbol{y}}
\def\a{\boldsymbol{a}}
\def\m{\boldsymbol{m}}
\def\e{\mathrm{e}}
\def\d{\mathrm{d}}
\def\ind{\mathrm{ind}}
\def\sgn{\mathrm{sgn}}
\def\Uni{\mathrm{Uni}}
\def\NUni{\mathrm{NonUni}}
\def\f{\mathfrak{f}}
\def\P{\mathcal{P}}
\def\F{\mathcal{F}}
\def\B{\mathcal{B}}
\def\S{\mathcal{S}}
\def\K{\mathcal{K}}
\def\D{\mathcal{D}}
\title{An algebraic combinatorial approach to Sylvester's denumerant}
\author{Guoce Xin$^{1}$ and Chen Zhang$^{2,*}$}
\address{ $^{1,2}$School of Mathematical Sciences, Capital Normal University, Beijing 100048, PR China}
\email{$^1$\texttt{guoce\_xin@163.com}\ \& $^2$\texttt{ch\_enz@163.com}}
\date{December 4, 2023}
\thanks{$*$ Corresponding author.}
\begin{document}

\begin{abstract}
For a positive integer sequence $\a=(a_1, \dots, a_{N+1})$, Sylvester's denumerant $E(\a; t)$ counts the number of nonnegative integer solutions to $\sum_{i=1}^{N+1} a_i x_i = t$ for a nonnegative integer $t$. It has been extensively studied and a well-known result asserts that $E(\a; t)$ is a quasi-polynomial in $t$ of degree $N$. A milestone is Baldoni et al.'s polynomial algorithm in 2015 for computing the top $k$ coefficients when $k$ is fixed. Their development uses heavily lattice point counting theory in computational geometry. In this paper, we explain their work in the context of algebraic combinatorics and simplify their computation. Our work is based on constant term method, Barvinok's unimodular cone decomposition, and recent results on fast computation of generalized Todd polynomials. We develop the algorithm \texttt{CT-Knapsack}, together with an implementation in \texttt{Maple}. Our algorithm avoids plenty of repeated computations and is hence faster.
\end{abstract}

\maketitle

\noindent
\begin{small}
 \emph{Mathematic subject classification}: Primary 05--08; Secondary 05--04, 05A17.
\end{small}

\noindent
\begin{small}
\emph{Keywords}: Sylvester's denumerant; Ehrhart quasi-polynomial; Barvinok's unimodular cone decomposition; Constant term.
\end{small}

\section{Introduction}
For a positive integer sequence $\a  = (a_1, \dots, a_{N+1})$ and a nonnegative integer $t$,
the Sylvester's \emph{denumerant} $E(\a;t)$ denotes the number of nonnegative integer solutions to the equation $\sum_{i=1}^{N+1} a_i x_i = t$.
It is well-known that $E(\a;t)$ is a \emph{quasi-polynomial} in $t$ of degree $N$, i.e., $E(\a;t) = \sum_{m=0}^{N} E_m(\a; t) t^m$,
where $E_m(\a; t)$ is a periodic function in $t$ with period $\lcm(\a)$ for each $m$.
This period was first given by Sylvester and Cayley and then proved by Bell in a simpler way in 1943 (See \cite{1943-Bell} and references therein).
Bell also remarked that $\lcm(\a)$ is only the worst case period, and the actual period is usually smaller.

In what follows, we focus on the case when $\gcd(\a)=1$.
The general case when $\gcd(\a)=d$ follows by the natural formula $E(\a; dt) = E(\a /d; t)$.
Denote by
\begin{equation}\label{e-Fofa}
F(\a; z,t) := \frac{z^{-t-1}}{\prod_{i=1}^{N+1} (1-z^{a_i})}=\sum_{x_1,\dots, x_{N+1} \ge 0} z^{\sum_{i=1}^{N+1} a_i x_i - t-1}.
\end{equation}
Then we clearly have
\[
E(\a;t)= \res_{z=0} F(\a; z,t) = \CT_z z F(\a; z,t),
\]
where $\CT_z f(z) $ denotes the constant term of the Laurent series expansion (at $z=0$) of $f(z)$, and
$\res_{z=z_0}f(z)$ denotes the residue of $f(z)$ when expanded as a Laurent series at $z=z_0$. More precisely,
$\res_{z=z_0} \sum_{i \ge i_0} c_i (z-z_0)^i = c_{-1}$.

Sylvester's denumerant is associated with many problems.

i) The Frobenius problem (or coin-exchange problem) is to find the largest $t$ such that $E(\a; t)=0$. A comprehensive overview and some applications of the Frobenius problem can be found in \cite{book-Frobenius}.

ii) The gaps (or holes) of a numerical semigroup $\langle \a \rangle = \NN a_1 + \cdots + \NN a_{N+1}$ refer to those $t$ such that $E(\a; t)=0$.
For general references of numerical semigroups, see \cite{book-NumericalSemigroups,book-Frobenius}.

iii) Thinking about the denumerant problem in another way, if $a_i$'s are pairwise relatively prime, then the Fourier-Dedekind sums form building blocks of $E(\a; t)$. See \cite{book-Beck} for details and further references.

iv) In integer optimization, deciding if $E(\a; t) > 0$ is an important integer feasibility problem. See, e.g., \cite{2004-HardKnapsack}.

It is well-known that both the problems of deciding whether $E(\a; t) > 0$ for a given $t$ and finding the Frobenius number of $\a$ are NP-hard,
and computing $E(\a; t)$ is $\#$P-hard.
The denumerant problem has been extensively studied, not only in computational geometry but also in algebraic combinatorics.
See, e.g., \cite{2002-Agnarsson,2018-Aguilo,2015-Baldoni-Denumerant,1943-Bell,1995-Lisonek,2022-Uday}.

The following result was obtained by Baldoni et al. in 2015 \cite{2015-Baldoni-Denumerant} using generating function and technique in computational geometry.
\begin{theo}\label{theo-Baldoni}
Given any fixed integer $k$, there is a polynomial time algorithm to compute the highest $k+1$ degree terms of the quasi-polynomial $E(\a;t)$:
\[
  \sum_{i=0}^k E_{N-i}(\a;t) t^{N-i}.
\]
The coefficients are recovered as step polynomial functions of $t$.
\end{theo}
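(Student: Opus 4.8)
The plan is to stay within the \emph{constant term method}: starting from $E(\a;t)=\CT_z F(\a;z,t)$, rewrite this as a sum of local constant terms (residues) at roots of unity, discard those that cannot affect the top $k+1$ coefficients of the quasi-polynomial, and evaluate what survives through $O(N^k)$ lattice-point computations of bounded dimension, each handled by Barvinok's unimodular cone decomposition together with fast evaluation of generalized Todd polynomials.

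First I would pass from the global constant term to residues at roots of unity. Since $\CT_z g(z)=\res_{z=0}z^{-1}g(z)$, the excerpt already records $E(\a;t)=\res_{z=0}F(\a;z,t)$. The finite poles of the rational function $F(\a;z,t)$ are $z=0$ and the roots of $\prod_i(1-z^{a_i})$, all roots of unity, and since $t\ge 0$ we have $F(\a;z,t)=O(z^{-2})$ as $z\to\infty$, so the residue at infinity vanishes; as all residues of a rational function sum to zero,
\[
E(\a;t)=-\sum_{\zeta}\res_{z=\zeta}F(\a;z,t),
\]
summed over all roots of unity $\zeta$. If $\zeta$ is a primitive $q$-th root of unity then $1-z^{a_i}$ vanishes at $\zeta$ exactly when $q\mid a_i$, so $F$ has a pole there of order $d_q:=\#\{i:q\mid a_i\}$; expanding $z^{-t-1}$ about $\zeta$ shows $\res_{z=\zeta}F$ is $\zeta^{-t}$ times a polynomial in $t$ of degree $d_q-1$ with coefficients depending only on $\a$ and $q$, so the primitive $q$-th roots together contribute a quasi-polynomial summand of degree at most $d_q-1$.

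Consequently the coefficients of $t^{N},\dots,t^{N-k}$ receive contributions only from $q$ with $d_q\ge N+1-k$. With $f_{\bar S}:=\gcd\{a_i:i\notin\bar S\}$ for $\bar S\subseteq\{1,\dots,N+1\}$, one has $\{q:d_q\ge N+1-k\}=\bigcup_{|\bar S|\le k}\{q:q\mid f_{\bar S}\}$ and $\gcd(f_{\bar S_1},f_{\bar S_2})=f_{\bar S_1\cap\bar S_2}$; partitioning $\{q:d_q\ge N+1-k\}$ by the exact set $\{i:q\nmid a_i\}$ and Möbius-inverting over subsets then reduces the problem to computing, for each of the $O(N^k)$ subsets $\bar S$ with $|\bar S|\le k$, the quantity $T_{\bar S}(t)$ obtained by summing the residue contributions over all $q\mid f_{\bar S}$, truncated to the powers $t^{N},\dots,t^{N-k}$. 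Fixing such an $\bar S$ and setting $f:=f_{\bar S}$, the substitution $z=\zeta\e^{s}$ near each $f$-th root of unity $\zeta$ turns $T_{\bar S}(t)$ into
\[
\sum_{\zeta^{f}=1}\zeta^{-t}\,\res_{s=0}\frac{\e^{-ts}}{\prod_{i\notin\bar S}(1-\e^{a_i s})\,\prod_{i\in\bar S}(1-\zeta^{a_i}\e^{a_i s})}.
\]
Here the inner residue is a coefficient of the Laurent expansion in $s$ of a product of the shape $\e^{-ts}\big/\prod(1-\e^{c_i s})$ — a generalized Todd polynomial — which, since only the top $k+1$ powers of $t$ are needed, we truncate to bounded order in $s$ and evaluate quickly; and it depends on $\zeta$ only through the at most $|\bar S|\le k$ roots of unity $\zeta^{a_i}$, $i\in\bar S$. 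Hence the outer sum over the $f$-th roots of unity reduces to a sum over $j\bmod f$ governed by just the at most $k+1$ linear forms $a_i j$ ($i\in\bar S$) and $tj$ modulo $f$ — a generalized Fourier--Dedekind sum — which by Barvinok's unimodular cone decomposition is computed in polynomial time once $k$ is fixed, again via bounded-order generalized Todd expansions at the apices of the resulting unimodular cones. Summing the $O(N^k)$ quantities $T_{\bar S}$ with the Möbius signs recovers $\sum_{i=0}^{k}E_{N-i}(\a;t)t^{N-i}$, with each periodic coefficient $E_{N-i}(\a;t)$ naturally expressed through fractional parts such as $\{a_i t/f_{\bar S}\}$, that is, as a step polynomial in $t$.

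The main obstacle is exactly this reduction. A priori the set $\{q:d_q\ge N+1-k\}$ — equivalently the union of the sets of $f_{\bar S}$-th roots of unity — can be exponentially large, so the residues cannot be summed one by one; what rescues the argument is that for each fixed $\bar S$ the whole sum is a single generalized Dedekind sum whose intrinsic dimension is the bounded number $|\bar S|\le k$. Making this reduction explicit, and choosing the truncations of the Todd expansions so that only boundedly many — hence polynomial-bit-size — coefficients ever occur, is the real work, and it falls exactly into the regime where Barvinok's algorithm and the fast evaluation of generalized Todd polynomials run in polynomial time.
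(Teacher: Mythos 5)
Your proposal follows essentially the same route the paper takes to establish this theorem: pass from $\CT_z$ to residues at roots of unity via the sum-of-residues identity (Theorem~\ref{theo-ressum}), observe that only roots $\zeta$ of high pole order ($\#\{i:\zeta^{a_i}=1\}\ge m+1$) can affect $E_m$, organize those roots by the relevant gcds through an inclusion--exclusion/M\"obius argument (Proposition~\ref{prop-EmeqsummuEmf}), perform the substitution $z=\zeta\e^s$ and split off the ``$f\mid a_i$'' factors as a truncated Todd-type power series, and handle the remaining bounded-dimensional generalized Dedekind sum $\S(\a,f,T;s)$ via Lemmas~\ref{lem-cone-1}--\ref{lem-cone-3} and Barvinok's unimodular cone decomposition. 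The only cosmetic divergence is that you M\"obius-invert over the lattice of subsets $\bar S$ (so $O(N^k)$ terms indexed by complements $\bar S$ with $|\bar S|\le k$), whereas the paper first collapses subsets to the distinct values $f=\gcd(\a_J)$ and then M\"obius-inverts over the divisibility poset $\hat P_m$ of these $f$'s; both give polynomially many terms for fixed $k$, and the paper's version merely avoids redundant evaluations when distinct subsets produce the same gcd, so the proof content is the same.
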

They also developed a \texttt{Maple} package \texttt{M-Knapsack} and a \texttt{C++} package \texttt{LattE Knapsack}.
Both packages are released as part of the software package \texttt{LattE integrale} \cite{2014-Baldoni-LattE}.
A similar result for simplices was obtained earlier by Barvinok \cite{2006-Barvinok-simplex}. The idea was implemented in another released package \texttt{LattE Top-Ehrhart} \cite{2012-Baldoni-LattETop}.
Currently, \texttt{LattE Knapsack} is the best package for $E(\a;t)$.

Baldoni et al.'s computation of $E(\a; t)$ can be illustrated
by the flow chart in Figure \ref{fig-flowchart}. The notation will be defined later.
\begin{figure}[h]
  \centering
  \includegraphics[width=12cm]{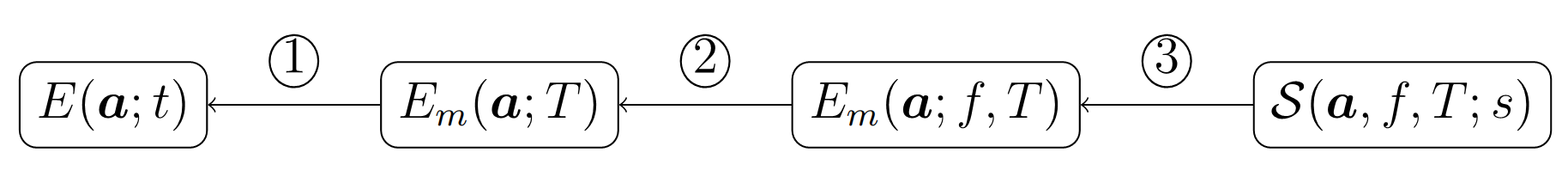}
  \caption{The flow chart of the process of the computation of $E(\a; t)$.}\label{fig-flowchart}
\end{figure}

\begin{enumerate}
  \item[\textcircled{1}] Reduce $E(\a; t)$ to $E_m(\a; T)$ by residue tricks.

  \item[\textcircled{2}] Reduce $E_m(\a; T)$ to $E_m(\a; f, T)$ by the inclusion-exclusion principle.

  \item[\textcircled{3}] Reduce $E_m(\a; f, T)$ to $\S(\a, f, T; s)$ by simple split.
\end{enumerate}
Then the problem is reduced to computing $\S(\a, f, T; s)$, which has a connection with simplicial cone, and hence can be solved using Barvinok's
unimodular cone decomposition.

Denumerant problem was also studied in algebraic combinatorics in terms of constant term manipulation. See, e.g., \cite{2012-Zeilberger,2015-Xin-CTEuclid}.
The concept of cone is known to the combinatorial community, but Barvinok's unimodular cone decomposition was treated as a black box.

Our original motivation of this work is to give a better understanding of Theorem \ref{theo-Baldoni} in constant term language. The most intriguing part of Baldoni et al.'s work is the complicated treatment of $\S(\a, f, T; s)$. The other steps are not hard to follow. The use of inclusion-exclusion principle
in \textcircled{2} is a necessary step for their polynomial time algorithm.

Our main contributions in this paper are as follows.
\begin{enumerate}
  \item Introduce Barvinok's decomposition of simplicial cone into unimodular cones
  as a tool (not a black box) to the combinatorial community.
  Algorithm \ref{alg-barv} is provided and an explicit example is given.

  \item Connect $\S(\a, f, T; s)$ with a simplicial cone by two simple results in constant term field, namely, Lemmas \ref{lem-cone-1} and \ref{lem-cone-2}.

  \item After unimodular cone decomposition, one needs to compute certain limits. We use two results in a recent work \cite{2023-Xin-GTodd} to simplify the computation.

  \item For each $f$, Algorithm \ref{alg-Emf} computes $E_m(\a; f, T)$ simultaneously for different $m$, and hence avoids plenty of repeated computations.

  \item We develop a \texttt{Maple} package \texttt{CT-Knapsack}, which has a significant speed advantage over \texttt{M-Knapsack}.
      See Section \ref{s-experiments} for details.
\end{enumerate}

We give an example to show the result obtained by our algorithm.
\begin{exam}\label{exam-2336}
For $\a = (2, 3, 3, 6)$, our algorithm gives
\begin{align*}
E(\a; t) &= \frac{1}{648} t^3 + \Big(\frac{1}{24} - \frac{\{\frac{2t}{3}\}}{36}\Big) t^2
+ \Big(\frac{13}{36} - \frac{\{\frac{t}{2}\}^2}{6} + \frac{\{\frac{2t}{3}\}^2}{6} - \frac{\{\frac{2t}{3}\}}{2} \Big) t \\
&\quad + \frac{2 \{\frac{t}{2}\}^3}{3} + \Big\{\frac{t}{2}\Big\}^2\Big\{\frac{2t}{3}\Big\} - \frac{\{\frac{2t}{3}\}^3}{3} - \frac{3 \{\frac{t}{2}\}^2}{2} + \frac{3 \{\frac{2t}{3}\}^2}{2} - \frac{\{\frac{t}{2}\}}{6} - \frac{13 \{\frac{2t}{3}\}}{6} + 1,
\end{align*}
where $\{r\} := r - \lfloor r \rfloor$ means the fractional part of $r$ for any $r \in \RR$.
It can be seen that the periods of $E_0(\a; t)$ and $E_1(\a; t)$ are equal to $6\ (=\lcm(\a))$,
and $E_2(\a; t)$ and $E_3(\a; t)$ have periods $3$ and $1$ respectively.
This fits Bell's result.

Taking $t=8$ gives $E(\a; 8)=5$. This says that there are exactly $5$ nonnegative integer solutions to the equation $2 x_1 + 3 x_2 + 3 x_3 + 6 x_4 = 8$.
We list them here:
\begin{align*}
x_1 &= x_4 = 1, x_2 = x_3 =0; & x_1 &= x_2 = x_3 = 1, x_4 = 0; & x_1 &= 1, x_2 = 2, x_3 = x_4 = 0; \\
x_1 &= 1, x_3 = 2, x_2 = x_4 = 0; & x_1 & = 4, x_2 = x_3 = x_4 = 0.
\end{align*}
\end{exam}

The structure of this paper is as follows.
In Section \ref{s-reduction}, we redo the reduction in \textcircled{1} using entirely Laurent series manipulations, and redo the reduction in \textcircled{2} by giving a better way to compute the posets and M\"obius function. This completes the reduction from $E(\a; t)$ to $E_m(\a; f, T)$.
In order to compute $E_m(\a; f, T)$, we introduce three tools for algebraic combinatorics in Section \ref{s-Barvinok}:
i) Barvinok's decomposition for simplicial cones;
ii) LattE's implementation of Barvinok's idea;
iii) a connection between certain constant term and a special kind of simplicial cones.
The computation is done in Section \ref{s-computation} using these tools.
Section \ref{s-experiments} gives a summary of our algorithm and computer experiments.
Section \ref{s-cr} is the concluding remark.

\section{The reduction from $E(\a; t)$ to $E_m(\a; f, T)$}\label{s-reduction}
We first prove the following result.
\begin{theo}\label{theo-ressum}
Suppose $r_1, \dots, r_k$ are positive integers, $b$ is a nonnegative integer and
\[
R(z)=\frac{z^{-b-1}}{(z-\xi_1)^{r_1}\cdots (z-\xi_k)^{r_k}}.
\]
Then
\[
\res_{z=0} R(z) = -\sum_{i=1}^k   \res_{z=\xi_i} R(z).
\]
\end{theo}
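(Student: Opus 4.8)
The plan is to use the classical residue theorem on the Riemann sphere $\widehat{\CC} = \CC \cup \{\infty\}$. The rational function $R(z)$ has poles only at $z = 0, \xi_1, \dots, \xi_k$ in $\CC$, so the sum of \emph{all} residues, including the one at $\infty$, must vanish:
\[
\res_{z=0} R(z) + \sum_{i=1}^k \res_{z=\xi_i} R(z) + \res_{z=\infty} R(z) = 0.
\]
Thus the theorem is equivalent to the claim that $\res_{z=\infty} R(z) = 0$.

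To verify that the residue at infinity vanishes, I would estimate the degree of $R(z)$ as a rational function. Recall $\res_{z=\infty} R(z) = -\res_{w=0} \frac{1}{w^2} R(1/w)$. Since the denominator of $R$ has degree $r_1 + \cdots + r_k$ in $z$ and the numerator is $z^{-b-1}$, we have $R(z) = O(z^{-b-1-(r_1+\cdots+r_k)})$ as $z \to \infty$. Because $b \ge 0$ and each $r_i \ge 1$ with $k \ge 1$, the exponent satisfies $-b-1-(r_1+\cdots+r_k) \le -2$, so $R(z)$ decays at least like $z^{-2}$; equivalently $\frac{1}{w^2}R(1/w)$ is holomorphic at $w=0$ (in fact vanishing there), and its residue at $w=0$ is $0$. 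Hence $\res_{z=\infty} R(z) = 0$ and the identity follows.

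One subtlety worth spelling out: the statement involves $\res_{z=0}R(z)$, which in the paper's convention means the coefficient of $z^{-1}$ in the Laurent expansion \emph{at $0$}, and this genuinely is the residue in the complex-analytic sense since $0$ is an isolated singularity (a pole, as $b+1 \ge 1$). So there is no mismatch between the combinatorial ``constant term / residue'' language used elsewhere in the paper and the analytic residue theorem invoked here. If one prefers to avoid complex analysis entirely, an alternative is a purely algebraic partial-fraction argument: expand $R(z)$ as a sum over its poles $\sum_i \sum_{j=1}^{r_i} \frac{c_{i,j}}{(z-\xi_i)^j} + \frac{(\text{polynomial part})}{1}$, note the polynomial part is zero by the degree count above, then observe that $\res_{z=0}$ and $\res_{z=\xi_i}$ of each simple block $\frac{c_{i,j}}{(z-\xi_i)^j}$ contribute opposite amounts when summed over all base points, again because each such block has no residue at infinity.

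The main obstacle is really just bookkeeping: making sure the degree inequality $b + 1 + \sum r_i \ge 2$ is stated cleanly (it uses $k \ge 1$, so the hypothesis that there is at least one $\xi_i$ matters), and being careful that the convention for $\res_{z=0}$ in the problem statement coincides with the analytic residue so that the residue theorem applies verbatim. Neither point is deep, so I expect the proof to be short.
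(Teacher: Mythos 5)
Your proof is correct and takes essentially the same route as the paper: both invoke the fact that the sum of all residues on the Riemann sphere (including at $\infty$) vanishes, and then show $\res_{z=\infty}R(z)=0$ from the decay of $R$ at infinity. The paper words the decay argument a bit more tersely (noting each factor $(z-\xi_i)^{-1}$ has only negative powers at $\infty$), while you spell out the degree count $-b-1-\sum r_i\le -2$; both amount to the same thing.
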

\begin{proof}
A well-known result in residue computation asserts that
\[
\res_{z=\infty} R(z) + \res_{z=0} R(z) + \sum_{i=1}^k   \res_{z=\xi_i} R(z) = 0.
\]
The theorem then follows by $\res\limits_{z=\infty} R(z) = 0$, since when expanded as a Laurent series at $z=\infty$,
$(z-\xi_i)^{-1}=z^{-1}(1-\xi_i/z)^{-1}$ has only negative powers in $z$.
\end{proof}

Let $F(\a; z, t)$ be as in \eqref{e-Fofa}. Applying Theorem \ref{theo-ressum} to $F(\a;z,t)$ gives
\[
E(\a;t)= \res_{z=0} F(\a; z,t) =-\sum_{\zeta} \res_{z=\zeta} F(\a; z,t),
\]
where $\zeta$ ranges over all nonzero poles of $F(\a; z, t)$, that is, $\zeta^{a_i}=1$ for some $i$.
For each residue, make the variable substitution $z=\zeta \e^s$, with $\d z=\zeta \e^s \d s$. We obtain
\begin{align*}
E(\a;t)&=-\sum_{\zeta}  \res_{s=0} F(\a; \zeta \e^s, t) \zeta \e^s \\
&= -\sum_{\zeta}  \res_{s=0} \frac{(\zeta \e^s)^{-t}}{\prod_{i=1}^{N+1} (1-\zeta^{a_i} \e^{a_is})}\\
&=  -\sum_{\zeta}  \res_{s=0} \e^{-ts} \frac{\zeta^{-T}}{\prod_{i=1}^{N+1} (1-\zeta^{a_i} \e^{a_is})} \Big|_{T=t} \\
&= -\sum_{m\ge 0} t^m \res_{s=0} \frac{(-s)^{m}}{m!} \sum_{\zeta}\frac{\zeta^{-T}}{\prod_{i=1}^{N+1} (1-\zeta^{a_i} \e^{a_is})} \Big|_{T=t}.
 \end{align*}

Thus we can write
\[
E(\a;t)=\sum_{m= 0}^{N} t^m E_m(\a; T) \big|_{T=t},
\]
where we will show that $E_m(\a;T)=0$ when $m\ge N+1$, and
\[
E_m(\a; T)= -\res_{s=0} \frac{(-s)^{m}}{m!} \sum_{\zeta}\frac{\zeta^{-T}}{\prod_{i=1}^{N+1} (1-\zeta^{a_i} \e^{a_is})}.
\]

For each $\zeta$, $(1-\zeta^{a_i} \e^{a_is})^{-1}$ is either analytic at $s=0$ when $\zeta^{a_i}\neq 1$ or has $s=0$ as a simple pole when $\zeta^{a_i}= 1$.
It follows that the pole order of $\frac{\zeta^{-T}}{\prod_{i=1}^{N+1} (1-\zeta^{a_i} \e^{a_is})}$ is equal to $\# \{i : \zeta^{a_i}=1\}\leq N+1$. Thus
$E_m(\a; T)=0$ when $m \ge N+1$.

To see $E_m(\a; t)=E_m(\a; T)\big|_{T=t}$ is the desired coefficient of the quasi-polynomial $E(\a;t)$, we show that $E_m(\a; t)$ is a periodic function in $t$
with a period $\lcm(\a)$. This follows by the fact that $\zeta^{a_i}=1$ for some $a_i$ and hence $\zeta^{\lcm(\a)}=1$ holds for all $\zeta$.

Next we simplify $E_m(\a; T)$ for a particular $m$. The sum in $E_m(\a; T)$ is only over $\zeta$ satisfying $\# \{i : \zeta^{a_i}=1\}\geq m+1$.
This is equivalent to saying that if $\zeta$ is an $f$-th primitive root of unity, i.e., $\zeta=\exp(2j\pi\sqrt{-1}/f)$ with $\gcd(j,f)=1$,
then $\# \{i : \zeta^{a_i}=1\} = \# \{i : f \mid a_i\}\geq m+1$. In other words, $f$ is a factor of $\gcd(\a_J)$, where
$J=\{j_1,\dots, j_{m+1}\}$ is an $m+1$ subset of $\{1,2,\dots,N+1\}$, and $\a_J=(a_j)_{j\in J}$.

A crucial observation in \cite{2015-Baldoni-Denumerant} is that the number of summands in $E_m(\a; T)$ depends on the $a_j$'s, and may be exponential in the input size
$\sum_{j} \log(a_j)+1$,  while $\gcd(\a_J)$ has at most $2^{N+1}-1$ values. Moreover, if we restrict $|J|\geq m+1$, then $\gcd(\a_J)$ has at most
$\sum_{j\geq m+1} \binom{N+1}{j} =\sum_{j\leq N- m} \binom{N+1}{j}$ values, which is a polynomial in $N$ of degree $k$ when $m=N-k$.
This is why there is a polynomial time algorithm when $k$ is fixed and $N$ is taken as an input.

To carry out the idea, define $\f_m := \{\gcd(\a_J): |J| \ge m+1\}$.
Then we can write
\begin{equation}\label{e-Em-1}
E_m(\a; T)= -\res_{s=0} \frac{(- s)^{m}}{m!} \sum_{\zeta \in \P_m} \frac{\zeta^{-T}}{\prod_{i=1}^{N+1} (1-\zeta^{a_i} \e^{a_is})},
\end{equation}
where $\P_m = \{\zeta : \zeta^f =1 \text{ for some } f \in \f_m\}$.

The following result reduces the computation of $E_m(\a; T)$ to that of $E_m(\a; f, T)$. It was first obtained in \cite{2015-Baldoni-Denumerant} by the inclusion-exclusion principle. Our statement is slightly different.
\begin{prop}\label{prop-EmeqsummuEmf}
Let $\a=(a_1,\dots,a_{N+1})$ be a positive integer sequence and $E_m(\a; T)$ be as in \eqref{e-Em-1}. We have
\begin{equation}\label{e-Em-2}
E_m(\a; T) = \sum_{f \in \f_m} \mu_m(f) E_m(\a; f, T),
\end{equation}
where
\begin{equation}\label{e-Mobi}
\mu_m(f) = 1-\sum_{f'\in \f_m : f \mid f', f'\neq f}\mu_m(f'),
\end{equation}
\begin{equation}\label{e-Emf-1}
E_m(\a; f, T) = \res_{s=0}- \frac{ (-s)^{m}}{m!} \sum_{\zeta:\zeta^f=1} \frac{\zeta^{-T}}{\prod_{i=1}^{N+1} (1-\zeta^{a_i} \e^{a_is})}.
\end{equation}
In particular, $\mu_m(f)=1$ if $\{f'\in \f_m : f \mid f', f' \neq f\}=\varnothing$.
\end{prop}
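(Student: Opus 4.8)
The plan is to expand both sides of \eqref{e-Em-2} as finite sums over individual roots of unity and to check that on the right every root of unity receives total coefficient $1$. Using linearity of $\res_{s=0}$, set, for each root of unity $\zeta$,
\[
c(\zeta):=-\res_{s=0}\frac{(-s)^{m}}{m!}\,\frac{\zeta^{-T}}{\prod_{i=1}^{N+1}(1-\zeta^{a_i}\e^{a_is})},
\]
so that $E_m(\a;f,T)=\sum_{\zeta:\zeta^{f}=1}c(\zeta)$ and $E_m(\a;T)=\sum_{\zeta\in\P_m}c(\zeta)$. I would then group roots of unity by their exact order and put $B_d:=\sum_{\zeta\,:\,\mathrm{ord}(\zeta)=d}c(\zeta)$. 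Since $\zeta^{f}=1$ is equivalent to $\mathrm{ord}(\zeta)\mid f$, this gives $E_m(\a;f,T)=\sum_{d\mid f}B_d$, while $E_m(\a;T)=\sum_{d\in D_m}B_d$, where $D_m:=\{d:\ d\mid f\text{ for some }f\in\f_m\}$ is the set of all divisors of elements of $\f_m$. Substituting into the right-hand side of \eqref{e-Em-2} and interchanging the two finite summations, the desired identity reduces to the purely combinatorial statement
\[
\sum_{f\in\f_m,\ d\mid f}\mu_m(f)=1\qquad\text{for every }d\in D_m .
\]

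To prove this I would use one structural property of $\f_m$: it is closed under greatest common divisors. Indeed, if $f_i=\gcd(\a_{J_i})$ with $|J_i|\ge m+1$ for $i=1,2$, then $\gcd(f_1,f_2)=\gcd(\a_{J_1\cup J_2})$ with $|J_1\cup J_2|\ge m+1$, so $\gcd(f_1,f_2)\in\f_m$; iterating, the $\gcd$ of any nonempty subset of $\f_m$ again lies in $\f_m$. Now fix $d\in D_m$ and let $E_d:=\{f\in\f_m:\ d\mid f\}$, which is nonempty by definition of $D_m$. Then $g:=\gcd(E_d)$ belongs to $\f_m$, is a multiple of $d$, and divides every member of $E_d$; hence $E_d=\{f\in\f_m:\ g\mid f\}$. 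Reading the defining recursion \eqref{e-Mobi} at $f=g$ and moving the sum to the left gives $\sum_{f\in\f_m,\ g\mid f}\mu_m(f)=1$, which is exactly the claim. (Recursion \eqref{e-Mobi} is well posed as a downward recursion on the finite divisibility poset $\f_m$, assigning $\mu_m=1$ to its maximal elements and propagating downward; the last assertion of the proposition is just the case in which the sum in \eqref{e-Mobi} is empty.)

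Putting the pieces together,
\[
\sum_{f\in\f_m}\mu_m(f)\,E_m(\a;f,T)=\sum_{d\in D_m}B_d\Big(\sum_{f\in\f_m,\ d\mid f}\mu_m(f)\Big)=\sum_{d\in D_m}B_d=E_m(\a;T),
\]
which is \eqref{e-Em-2}. I expect the main obstacle to be recognizing that the inclusion--exclusion coefficients must be controlled not merely on $\f_m$ but on its entire divisor-closure $D_m$, and that the $\gcd$-closure of $\f_m$ is precisely what forces each such coefficient to equal $1$ (without that closure the proposition actually fails, e.g.\ for a set like $\{2,3\}$). The remaining ingredients --- linearity of the residue and reindexing of finite sums --- are routine.
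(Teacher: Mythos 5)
Your proof is correct and, while it arrives at the same recursion \eqref{e-Mobi} as the paper, it reaches it by a genuinely more elementary and more transparent route. The paper sets up the reverse-inclusion poset on $\{\P^{(f)}\}_{f\in\f_m}$, adjoins a minimum $\hat{0}$, invokes ``the inclusion--exclusion principle'' to assert $[\P_m]=\sum_{f\in\f_m}\mu_m(f)[\P^{(f)}]$ with $\mu_m(f)=-\mu'_m(\hat{0},\P^{(f)})$, and then merely verifies that the poset-theoretic M\"obius function unwinds into \eqref{e-Mobi}. The crucial step --- that the indicator of the union is that particular $\mathbb{Z}$-linear combination --- is stated as a known fact; it is valid precisely because the family $\{\P^{(f)}\}$ is closed under intersection, i.e.\ $\P^{(f_1)}\cap\P^{(f_2)}=\P^{(\gcd(f_1,f_2))}$ and $\gcd(f_1,f_2)\in\f_m$, but the paper never makes this hypothesis explicit. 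Your argument dispenses with the poset-theoretic machinery entirely: you reduce \eqref{e-Em-2} pointwise (via linearity of the residue) to the coefficient identity $\sum_{f\in\f_m,\,d\mid f}\mu_m(f)=1$ for every divisor $d$ of some element of $\f_m$, then prove the $\gcd$-closure of $\f_m$ from its definition via $\gcd(\a_{J_1\cup J_2})=\gcd(\gcd(\a_{J_1}),\gcd(\a_{J_2}))$, and read the identity off directly from the recursion evaluated at $g=\gcd\{f\in\f_m:d\mid f\}$. This buys two things: it makes visible exactly which structural property of $\f_m$ the proposition depends on (and you correctly note that without $\gcd$-closure the statement is false, e.g.\ for $\{2,3\}$), and it avoids appealing to a nontrivial fact about M\"obius functions of intersection-closed families that the paper leaves unproved. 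The tradeoff is that the paper's phrasing is shorter and situates $\mu_m$ in the standard M\"obius-function framework, which some readers may find more familiar.
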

\begin{proof}
Denote by $\P^{(f)} := \{\zeta : \zeta^f =1\}$.
Then $\P_m = \cup_{f \in \f_m} \P^{(f)}$. The set $P_m=\{\P^{(f)} : f \in \f_m \}$ forms a poset (still denoted by $P_m$) with respect to reverse inclusion.
That is, $\P^{(f')} \preceq \P^{(f)}$ if $\P^{(f')} \supseteq \P^{(f)}$, i.e., $f \mid f'$.
We add the minimum element $\hat{0}$ into $P_m$ and call the new poset $\hat{P}_m$.
Then by the inclusion-exclusion principle, we can write the indicator function of $\P_m$ as a linear combination of the indicator functions of $\P^{(f)}$'s:
\[
[\P_m] = \sum_{f \in \f_m} \mu_m(f) [\P^{(f)}],
\]
where $\mu_m(f) := - \mu'_m(\hat{0}, \P^{(f)})$ and $\mu'_m(x, y)$ is the standard M\"obius function for the poset $\hat{P}_m$.
That is, $\mu'_m(x, x) = 1, \forall x \in \hat{P}_m$ and $\mu'_m(x, y) = -\sum_{x \preceq x' \prec y} \mu'_m(x, x'), \forall x \preceq y$.
Simple computation gives that
\begin{align*}
\mu_m(f) &= - \mu'_m(\hat{0}, \P^{(f)}) \\
&= \sum_{\hat{0} \preceq \P^{(f')} \prec \P^{(f)}} \mu'_m(\hat{0}, \P^{(f')}) \\
&= \mu'_m(\hat{0}, \hat{0}) - \sum_{\hat{0} \prec \P^{(f')} \prec \P^{(f)}} -\mu'_m(\hat{0}, \P^{(f')}) \\
&= 1 - \sum_{\hat{0} \prec \P^{(f')} \prec \P^{(f)}} \mu_m(f')\\
&= 1-\sum_{f'\in \f_m : f \mid f', f' \neq f}\mu_m(f').
\end{align*}
Then the lemma follows.
\end{proof}

The basic building block is $E_m(\a; f, T)$, which will be computed in Section \ref{s-computation} using tools in Section \ref{s-Barvinok}.
From \eqref{e-Em-2}, we see that
$E_m(\a; f, T)$ is not needed if $\mu_m(f) = 0$. This invokes us to define
\[
 \m(f) := \{m \in \m : \mu_m(f) \neq 0\},
\]
where $\m$ denotes the subscript set of $E_m(\a; T)$ that we want to compute.
Indeed, for each $f$, we compute $E_m(\a;f,T)$ simultaneously for all $m\in \m(f)$ (See Algorithm \ref{alg-Emf}).

To apply Proposition \ref{prop-EmeqsummuEmf}, we describe a better way to compute $\f_m$ and $\mu_m(f)$ for each $f\in \f_m$. In later computation, we also need the multi-set  $\a(f) = \{ a_i: f\mid a_i\}^*$, where we use the superscript $^*$ to denote multi-set.

\begin{algorithm}[H]
\DontPrintSemicolon
\KwInput{$\a$.}
\KwOutput{$S = \cup_{m\ge 0} \f_m$ and $L = \{(f, \a(f)): f \in S\}$.}

Assume $\a = (n_1\cdot a_1, \dots, n_k \cdot a_k)$, where $n_i \cdot a_i$ means $a_i$ occurs $n_i$ times, $a_1 < \cdots < a_k$.

\If{$k = 1$}
{
    \Return $S=\{a_1\}$ and $L = \{(a_1, \{n_1 \cdot a_1\}^*)\}$.
}
\Else
{

    Set $\a' = (a_1, \dots, a_k)$, $S = \{a_1\}$ and $L = \{(a_1, \{a_1\})\}$.

    \For{ $i$ \rm{from} $2$ \rm{to} $k$}
    {
        $tS := S$.  \qquad  \# (Denoted $S^{(i-1)}$.)

        \For{$f \in tS$}
        {
            Compute $tf := \gcd(a_i, f)$.

            \If{$tf \in S$}
            {
                $\a(tf) := \a(tf) \cup \{a_i\}$.
            }
            \Else
            {
                $\a(tf) := \a(f) \cup \{a_i\}$;

                $S := S \cup \{tf\}$ and $L := L \cup \{(tf, \a(tf))\}$.
            }
        }

        $S := S \cup \{a_i\}$ and $L := L \cup \{(a_i, \{a_i\})\}$.
    }
}

Update $\a(f)$ by replacing $a_i$ by $n_i \cdot a_i$ for all $f \in S$.

\Return $S$ and $L$.

\caption{Algorithm \texttt{fset}}\label{alg-fset}
\end{algorithm}

\begin{rem}
From the outputs of Algorithm \texttt{fset}, we can easily obtain $\f_m = \{f \in S : \#\a(f) \ge m+1\}$ for all $m$.
Algorithm \texttt{fset} only performs $|S^{(1)}|+\cdots + |S^{(k-1)}|$ $\gcd$ computations. This avoids plenty of repeated $\gcd$ computations.
In the worst case, $k=N+1$, $|S^{(i)}|= 2^i-1$ and we still need $2^{N+1}-N-2$ $\gcd$ computations.
The algorithm is not efficient when we only need $\gcd (\a_{J})$ for all $|J|\ge N-k$ when $k$ is fixed and $N$ is large.
\end{rem}

\begin{exam}\label{ex-fset}
Input $\a = (2,3,3,6)$.

1. $\a = (1 \cdot 2, 2 \cdot 3, 1 \cdot 6)$. Set $\a' = (2,3,6)$, $S = \{2\}$, $L = \{(2, \{2\})\}$.

2. $a_2 = 3$, $tS := \{2\}$.

(2.1) Compute $\gcd(3,2) = 1 \notin S$ and update $S := \{1, 2\}$ and $L := \{(1, \{2,3\}), (2, \{2\})\}$;

(2.2) Update $S := \{1, 2, 3\}$ and  $L := \{(1, \{2,3\}), (2, \{2\}), (3, \{3\})\}$.

3. $a_3 = 6$, $tS := \{1, 2, 3\}$.

(3.1) Compute $\gcd(6,1) = 1 \in S$ and update $\a(1) := \{2, 3, 6\}$;

(3.2) Compute $\gcd(6,2) = 2 \in S$ and update $\a(2) := \{2, 6\}$;

(3.3) Compute $\gcd(6,3) = 3 \in S$ and update $\a(3) := \{3, 6\}$;

(3.4) Update $S := \{1, 2, 3, 6\}$ and $L := \{(1, \{2, 3, 6\}), (2, \{2,6\}), (3, \{3, 6\}), (6, \{6\})\}$.

4. Update $\a(1) := \{2, 3, 3, 6\}^*$, $\a(2) := \{2, 6\}^*$, $\a(3) := \{3, 3, 6\}^*$ and $\a(6) := \{6\}^*$.

5. Output $S=\{1,2,3,6\}$ and $L = \{(1, \{2, 3, 3, 6\}^*), (2, \{2, 6\}^*), (3, \{3, 3, 6\}^*), (6, \{6\}^*)\}$.

By the outputs, we have $\f_0 = \{1,2,3,6\}$, $\f_1 = \{1,2,3\}$, $\f_2 = \{1,3\}$ and $\f_3 = \{1\}$.
\end{exam}

The following algorithm is used to compute $\mu_m(f)$ for each $f \in \f_m$.

\begin{algorithm}
\DontPrintSemicolon
\KwInput{$\f_m$.}
\KwOutput{$\{(f, \mu_m(f)): f \in \f_m, \mu_m(f) \neq 0\}$.}

Compute $\mu_m(f)$ from large to small for $f \in \f_m$ by the formula \eqref{e-Mobi}.
The computation order from large to small ensures that all $\mu_m(f')$ required have been obtained when computing $\mu_m(f)$.

\Return $\{(f, \mu_m(f)): f \in \f_m, \mu_m(f) \neq 0\}$.

\caption{Algorithm \texttt{M\"obius}}\label{alg-Morb}
\end{algorithm}

\begin{exam}\label{ex-Morb}
Continue Example \ref{ex-fset}, we apply Algorithm \texttt{M\"obius} to $\f_0 = \{1,2,3,6\}$.
The detailed computations are as follows:
\begin{align*}
& \mu_0(6) = 1; &
& \mu_0(3) = 1 - \mu_0(6) =0; \\
& \mu_0(2) = 1 - \mu_0(6) =0; &
& \mu_0(1) = 1 - \mu_0(2) - \mu_0(3) - \mu_0(6) = 0.
\end{align*}
Finally, output $\{(6, 1)\}$. Thus $E_0(\a; T) = E_0(\a; 6, T)$.
Similarly, by applying Algorithm \texttt{M\"obius} to $\f_1=\{1,2,3\}$, $\f_2=\{1,3\}$ and $\f_3 = \{1\}$ respectively, we can obtain
\begin{align*}
E_1(\a; T) &= -E_1(\a; 1, T) + E_1(\a; 2, T) + E_1(\a; 3, T), & E_2(\a; T) &= E_2(\a; 3, T),\\
E_3(\a; T) &= E_3(\a; 1, T).
\end{align*}
Moreover, if $\m=\{0,1,2,3\}$, then $\m(1)=\{1,3\}$, $\m(2)=\{1\}$, $\m(3)=\{1,2\}$, $\m(6)=\{0\}$.
\end{exam}

\section{Main tools in the computation of $E_m(\a; f, T)$}\label{s-Barvinok}
This section introduces several tools used in our computation of $E_m(\a; f, T)$.
We first briefly introduce Barvinok's unimodular cone decomposition and its implementation by LattE.
The well-known LLL's algorithm \cite{1982-LLL} plays an important role. Then we introduce
a constant term concept and its connection with simplicial cone.

This section has its own notation system, for instance $A^T$ means the transpose of the matrix $A$.
We hope to present the materials in the context of algebraic combinatorial theory.

\subsection{Barvinok's unimodular cone decomposition}
We follow notation in \cite{2004-LattE}, but only introduce necessary concepts.

Let $A=(\alpha_1\mid \alpha_2\mid \dots\mid \alpha_d)$ be a $d\times d$ nonsingular rational matrix.
Then its column vectors generate a lattice and a cone defined by
\begin{align*}
 L(A)&=L(\alpha_1 \mid \alpha_2 \mid \dots \mid \alpha_d)=A\ZZ^d = \{k_1\alpha_1+\cdots+k_d \alpha_d: k_i\in \ZZ, \ i=1,2,\dots, d\},\\
 C(A)&=C(\alpha_1 \mid \alpha_2 \mid \dots \mid \alpha_d)= \{k_1\alpha_1+\cdots+k_d \alpha_d: k_i\in \RR_{\geq 0}, \ i=1,2,\dots, d\}.
\end{align*}
The $\alpha_1,\dots, \alpha_d$ is also called a lattice basis of $L(A)$. The LLL's algorithm
computes a reduced lattice basis
of $L(A)$. That is, it outputs a new $d\times d$ matrix $A'=(\alpha_1'\mid \alpha_2'\mid \dots\mid \alpha_d')$ with $L(A)=L(A')$.
The column vectors $\alpha_1', \alpha_2', \dots, \alpha_d'$
are short and nearly orthogonal to each other, and each $\alpha_i'$ is an approximation of the shortest vector in the lattice, in terms of Euclidean length.

LLL's algorithm has many applications due to the nice properties of the reduced basis. We will use it to
\emph{find the smallest vector in a lattice}. The \emph{smallest} vector $\beta=(b_1,\dots, b_d)$ in the lattice $A\ZZ^d$ is the nonzero vector
with the minimum infinity norm $||\beta||_\infty=\max_{i} |b_i|$.
Dyer and Kannan \cite{1997-Baralg} observed that the $\alpha_i'$'s are also approximations of the smallest vector $\beta \in A\ZZ^d$, and $\beta$ can be searched over all $A' \gamma$ for integral $\gamma$ with small entries. This search is referred to as the \emph{enumeration step}.

Now we turn to the cone $C(A)$. It is called \emph{simplicial} because its generators are linearly independent.
By definition, the cone is kept same if we replace $\alpha_i$ by $\bar{\alpha}_i=c_i \alpha_i$ for any $c_i\in \RR_{>0}$.
We choose $\bar{\alpha}_i$ to be the unique primitive vector, i.e., $\bar{\alpha}_i\in \ZZ^d$
and $\gcd (\bar{\alpha}_i) = 1$. Denote by $\bar{A} = (\bar{\alpha}_1\mid \bar{\alpha}_2\mid \dots\mid \bar{\alpha}_d)$, and by $\ind (\K) = |\det(\bar A)|$ the \emph{index} of the simplicial cone $\K=C(A)$.

For any set $S \subset \RR^n$, denote the multivariate generating function of $S$ by
\[
\sigma_{S}( \y)=\sum_{\alpha \in S \cap \ZZ^n } \y^\alpha.
\]
A well-known result of Stanley \cite{book-Stanley-EC1} states that
\[
\sigma_{\K}(\y)= \frac{\sigma_{\Pi(\K)}(\y)}{\prod_{j=1}^{d} (1-\y^{\bar\alpha_{j}})},
\]
where  $\Pi(\K)=\{ k_1 \bar\alpha_{1} +\cdots+ k_{d} \bar\alpha_d : 0 \le k_1,\dots,k_d < 1 \}$ is called the \emph{fundamental parallelepiped} of the cone $\K$.
Furthermore, $\ind (\K)$ gives the number of lattice points in $\Pi(\K)$. In particular, if $\bar A$ is unimodular, i.e., $\det( \bar A)=\pm 1$,
then $\K$ is called a \emph{unimodular cone} and $\sigma_{\K}(\y)$ has numerator $1$.

When $\K$ is shifted by a vector $v$, we obtain a \emph{vertex cone}
\[
\K^v = v + C(A) = \{v + k_1\alpha_1+\cdots+k_d \alpha_d: k_i\in \RR_{\geq 0}, \ i=1,2,\dots, d\}.
\]
There is a similar result of $\sigma_{\K^v}(\y)$, but we only need the formula for unimodular $\K$.
\begin{prop}\label{prop-VCone}
Suppose $\K=C(\bar{\alpha}_1\mid \bar{\alpha}_2\mid \dots\mid \bar{\alpha}_d)$ is a unimodular cone in $\RR^d$.
If $v$ is uniquely written as $v=\ell_1\bar\alpha_1+\cdots+\ell_d\bar\alpha_d$,
then \[
\sigma_{\K^v}( \y)=\sum_{\alpha \in \K^v \cap \ZZ^d } \y^\alpha = \frac{\y^{\sum_{j=1}^d \lceil \ell_j \rceil \bar\alpha_{j}}}{\prod_{j=1}^{d} (1-\y^{\bar\alpha_{j}})}.
\]
\end{prop}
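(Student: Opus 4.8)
The plan is to reduce the statement to Stanley's formula for $\sigma_{\K}(\y)$ by translating the vertex cone $\K^v$ so that it becomes a genuine (pointed-at-the-origin) simplicial cone, and then observing that unimodularity forces the shifted fundamental parallelepiped to contain exactly one lattice point. First I would write $v = \ell_1 \bar\alpha_1 + \cdots + \ell_d \bar\alpha_d$ as given and set $w := \sum_{j=1}^d \lceil \ell_j \rceil \bar\alpha_j$, which is a lattice point because each $\lceil \ell_j \rceil \in \ZZ$ and each $\bar\alpha_j \in \ZZ^d$. The key geometric claim is then
\[
\K^v \cap \ZZ^d = w + \big(\K \cap \ZZ^d\big),
\]
i.e. the lattice points of the translated cone are exactly $w$ plus the lattice points of the original unimodular cone $\K = C(\bar\alpha_1 \mid \cdots \mid \bar\alpha_d)$. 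Granting this, multiplying the generating function identity $\sigma_{w + (\K \cap \ZZ^d)}(\y) = \y^w \sigma_{\K}(\y)$ by Stanley's formula, together with the fact that a unimodular cone has $\sigma_{\K}(\y) = 1/\prod_j(1-\y^{\bar\alpha_j})$ (numerator $1$, since $\ind(\K) = 1$ means $\Pi(\K)$ contains only the origin), immediately gives the claimed expression.

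The main work is therefore establishing the displayed set equality. I would argue as follows. Since $\{\bar\alpha_1, \dots, \bar\alpha_d\}$ is a basis of $\ZZ^d$ (unimodularity), every point $x \in \RR^d$ has unique real coordinates $x = \sum_j x_j \bar\alpha_j$, and $x \in \ZZ^d$ iff all $x_j \in \ZZ$. A point $x$ lies in $\K^v = v + C(A)$ iff $x_j \ge \ell_j$ for all $j$. For such an $x$ that is also a lattice point, $x_j \in \ZZ$ and $x_j \ge \ell_j$ together are equivalent to $x_j \ge \lceil \ell_j \rceil$, i.e. $x_j - \lceil \ell_j \rceil \ge 0$ and $x_j - \lceil \ell_j \rceil \in \ZZ$; that is exactly the condition for $x - w$ to be a lattice point of $\K$. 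This is the heart of the proof and the step most prone to off-by-one slips: the ceiling appears precisely because we are intersecting a half-open real constraint with the integer lattice, and it is essential that the $\ell_j$ need not be integers while the $\bar\alpha_j$ generate the \emph{full} lattice $\ZZ^d$ (not a sublattice), which is what makes "integer combination of the $\bar\alpha_j$" coincide with "element of $\ZZ^d$."

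I expect no serious obstacle beyond bookkeeping; the only subtlety worth flagging explicitly is that Stanley's numerator $\sigma_{\Pi(\K)}(\y)$ collapses to $1$ exactly under the unimodular hypothesis, so the proof genuinely uses that assumption (the analogous formula for non-unimodular $\K^v$ would have a more complicated numerator summing over lattice points of a shifted parallelepiped, which is why the paper restricts to the unimodular case). A short remark to that effect, plus the set-equality argument above and a one-line invocation of Stanley's theorem, completes the proof.
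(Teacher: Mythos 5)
Your proof is correct and takes essentially the same approach as the paper, which does not give a formal argument but only illustrates the key set equality $\K^v \cap \ZZ^d = w + (\K \cap \ZZ^d)$ by a figure in the $d=2$ case and declares it "clear." You supply the rigorous coordinate computation (using that the $\bar\alpha_j$ form a $\ZZ$-basis of $\ZZ^d$, so that the real constraint $x_j \ge \ell_j$ combined with integrality $x_j \in \ZZ$ is equivalent to $x_j \ge \lceil \ell_j \rceil$) that the paper leaves to the reader.
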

We are not able to find a reference of this result, but the idea is simply illustrated by Figure \ref{fig-addvertex} for the $d=2$ case.
In Figure \ref{fig-addvertex}, we have depicted the cone $\K = C(\bar\alpha_1 \mid \bar\alpha_2)$ and the corresponding lattice.
For a vertex $v$ uniquely written as $v = \ell_1 \bar\alpha_1 + \ell_2 \bar\alpha_2$, the shifted cone $\K^v =  \{ v + k_1 \bar \alpha_1 + k_2 \bar \alpha_2 : k_1,k_2 \in \RR_{\ge 0} \}$ is shown as the shaded region.
It is clear that
\[
\K^v  \cap \ZZ^2 = \{\lceil \ell_1 \rceil \bar\alpha_1 + \lceil \ell_2 \rceil \bar\alpha_2 + k_1 \bar \alpha_1 + k_2 \bar \alpha_2: k_1,k_2 \in \ZZ_{\ge 0}\}.
\]
\begin{figure}[h]
  \centering
  \includegraphics[width=4cm]{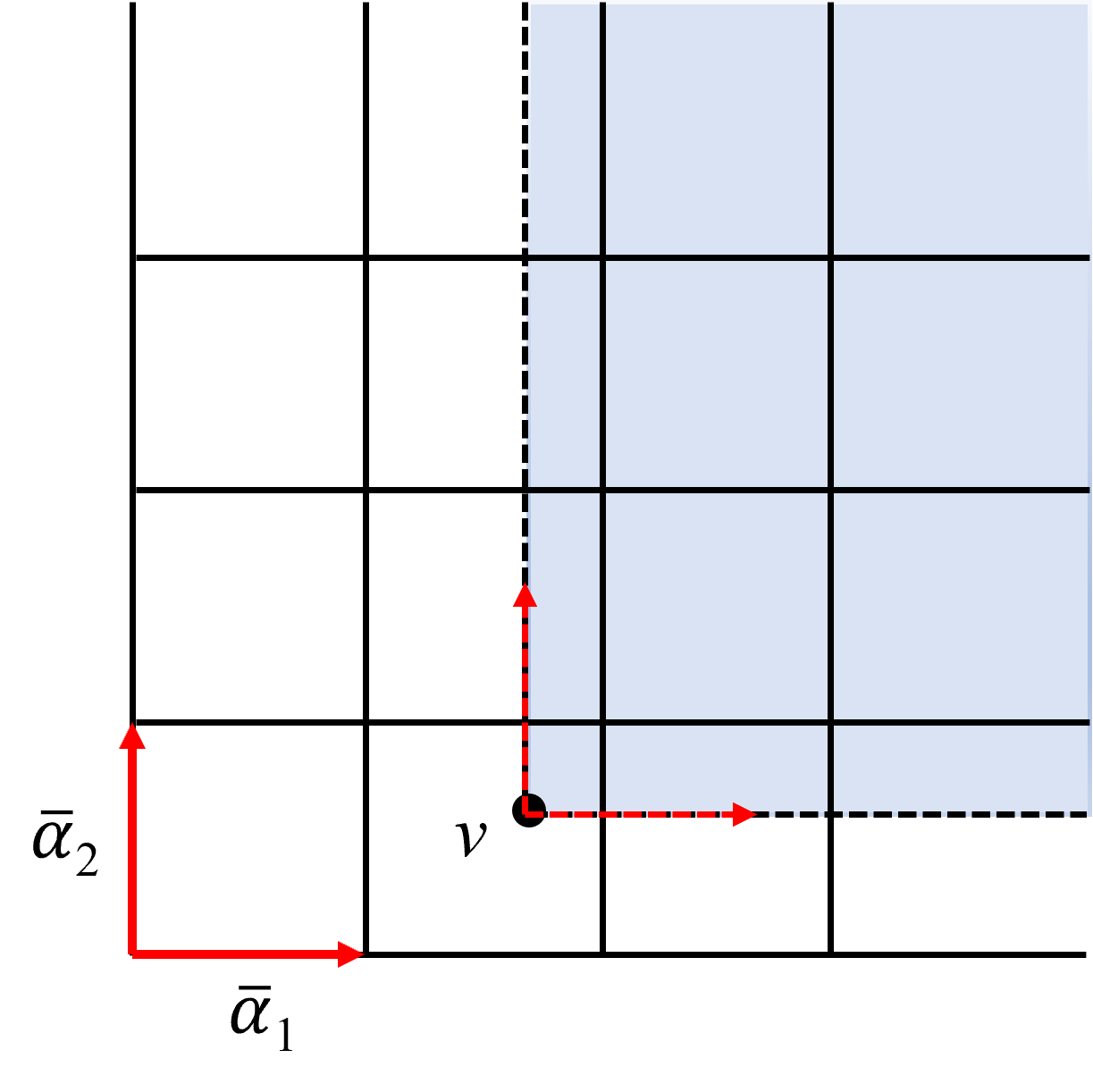}
  \caption{The illustration of $\K^v \cap \ZZ^d$ in the case when $d=2$.}\label{fig-addvertex}
\end{figure}

When $\ind (\K)$ is large, the traditional encoding of $\sigma_{\K}(\y)$ cannot be handled by computer. As early as 1994, Barvinok gave a solution in \cite{1994-Barvinok}. We state the translated version (combining with Proposition \ref{prop-VCone}) as follows.
\begin{theo} \label{theo-polytime-barv}
Fix the dimension $d$. There is a polynomial time algorithm which, for given rational cone $\K=C(A) \subset \RR^d$, computes unimodular cones $\K_i=C(A_i)$, $i\in I=\{1,2,\dots, N\}$ and numbers $\epsilon_i\in \{ 1,-1\}$ such that for any $v\in \RR^d$,
\[
\sigma_{\K^v}(\y)= \sum_{i\in I} \epsilon_i \sigma_{\K_i^v}(\y)=\sum_{i\in I} \epsilon_i
\frac{\y^{\alpha_{i0}}}{\prod_{j=1}^{d} (1-\y^{\alpha_{ij}})}.
\]
The sum is ``short" or polynomial in size, meaning that the number of summands $N=|I|$ is bounded by a polynomial in $\log (\ind (\K))$.
\end{theo}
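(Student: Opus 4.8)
The plan is to reduce the statement to Barvinok's original 1994 result on signed decompositions of rational cones into unimodular ones, and then to upgrade the identity from the level of integer-point generating functions of cones at the origin to the level of vertex cones $\K^v$. First I would recall the combinatorial core of Barvinok's construction: given a simplicial cone $\K = C(A)$ with $\ind(\K) > 1$, one uses a lattice-basis reduction (LLL) followed by the enumeration step to locate a short vector $w$ in the lattice $\bar A \ZZ^d$; writing $w = \sum_j \lambda_j \bar\alpha_j$, one forms the $d$ cones $\K^{(m)} = C(\bar\alpha_1 \mid \dots \mid w \mid \dots \mid \bar\alpha_d)$ obtained by replacing $\bar\alpha_m$ by $w$, each taken with a sign $\epsilon^{(m)} = -\sgn(\lambda_m)$ (and discarding lower-dimensional cones, which contribute measure-zero families of lattice points and can be absorbed by the inclusion-exclusion/Brion-type identity). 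The key quantitative point is that $\max_m \ind(\K^{(m)}) \le \ind(\K)^{(d-1)/d}$, because $w$ is close to the shortest vector whose infinity norm is bounded by $\ind(\K)^{1/d}$ times a dimension-dependent constant. Iterating this replacement yields, after $O(\log\log \ind(\K))$ rounds of depth each multiplying the number of cones by at most $d$, a tree of total size bounded by a polynomial in $\log \ind(\K)$ whose leaves are unimodular cones $\K_i = C(A_i)$, together with signs $\epsilon_i \in \{1,-1\}$.

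Second, I would justify that the resulting identity
\[
\sigma_{\K}(\y) = \sum_{i \in I} \epsilon_i \,\sigma_{\K_i}(\y)
\]
holds as rational functions. This is exactly Brion's / Barvinok's valuation property: the map $S \mapsto \sigma_S(\y)$ extends to a valuation on the algebra of indicator functions of rational polyhedra, and each single replacement step corresponds to an identity among indicator functions of the relevant cones modulo indicator functions of lower-dimensional cones, which lie in the kernel of the valuation. Since every leaf cone is unimodular, Stanley's formula gives each $\sigma_{\K_i}(\y)$ the closed form $1/\prod_{j=1}^d(1 - \y^{\alpha_{ij}})$, where we take the $\alpha_{ij}$ primitive.

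Third comes the vertex-cone upgrade, which is where Proposition \ref{prop-VCone} enters. The same valuation identity, applied to the \emph{shifted} polyhedra $\K^v = v + C(A)$ rather than to $\K$, yields $\sigma_{\K^v}(\y) = \sum_{i\in I}\epsilon_i\,\sigma_{\K_i^v}(\y)$ for the very same index set $I$ and signs $\epsilon_i$ produced above — one simply translates the whole decomposition tree by $v$, and translation by a fixed vector commutes with the valuation and preserves the lower-dimensional kernel. Then for each unimodular leaf cone $\K_i$, Proposition \ref{prop-VCone} evaluates $\sigma_{\K_i^v}(\y) = \y^{\alpha_{i0}} / \prod_{j=1}^d(1 - \y^{\alpha_{ij}})$, where $\alpha_{i0} = \sum_{j=1}^d \lceil \ell_{ij}\rceil \alpha_{ij}$ is read off from the expansion of $v$ in the basis $(\alpha_{i1}\mid\dots\mid\alpha_{id})$ of $\K_i$. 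Assembling these gives the displayed formula. Finally, I would note that each arithmetic operation along the way (LLL, the enumeration search for the short vector, computing $\gcd$'s to primitivize, evaluating determinants and ceilings) runs in time polynomial in the bit-size of $A$ once $d$ is fixed, so the whole procedure is polynomial-time and $|I|$ is polynomially bounded in $\log \ind(\K)$.

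The main obstacle I expect is the careful bookkeeping of the lower-dimensional cones: to get a genuine identity of rational functions (valid for \emph{all} $v$, and not merely generically) one must verify that the cones dropped at each replacement step really do form a cancelling system at the indicator-function level, i.e. that the signed sum of indicators of the full-dimensional replacement cones differs from the indicator of $\K$ only by indicators of cones lying in a hyperplane, which the valuation $\sigma$ kills. Getting the signs $\epsilon^{(m)} = -\sgn(\lambda_m)$ exactly right, and checking that this sign discipline is stable under iteration and under the translation $v$, is the delicate part; the complexity bound $N \le \mathrm{poly}(\log\ind(\K))$ is then a routine consequence of the $\ind^{(d-1)/d}$ contraction estimate. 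Since the excerpt explicitly permits citing Barvinok's theorem as the source, I would keep this part at the level of a careful pointer to \cite{1994-Barvinok} and \cite{2004-LattE}, spelling out only the vertex-cone refinement and its interaction with Proposition \ref{prop-VCone}, which is the genuinely new ingredient here.
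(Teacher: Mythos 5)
The paper does not actually prove this theorem; it is stated as a translated citation of Barvinok's 1994 result, combined with Proposition \ref{prop-VCone} for the vertex-cone formula. Your outline of what such a proof would contain is broadly aligned with the paper's exposition of the algorithm in Subsection \ref{ss-LattEBarv}: LLL to locate a short lattice vector, column-replacement to drop the index by an exponent $\frac{d-1}{d}$, iteration giving a tree of polynomial size in $\log(\ind(\K))$, Stanley's formula for unimodular cones, and the translation-by-$v$ refinement via Proposition \ref{prop-VCone}.

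However, there is a genuine error in your treatment of the lower-dimensional cones, and it is precisely the point the paper is careful about. You assert that ``the signed sum of indicators of the full-dimensional replacement cones differs from the indicator of $\K$ only by indicators of cones lying in a hyperplane, which the valuation $\sigma$ kills,'' and earlier that they ``contribute measure-zero families of lattice points.'' Neither justification is correct: a lower-dimensional cone (say a ray in $\RR^2$) certainly contains lattice points and has a nonzero generating function, so $\sigma$ does \emph{not} annihilate indicators of cones lying in a hyperplane. What $\sigma$ kills are polyhedra containing a whole line. Barvinok's original algorithm therefore had to track the lower-dimensional cones explicitly. The device that lets one discard them — spelled out in Subsection \ref{ss-LattEBarv} — is Brion's polarization trick: one runs the replacement recursion on the dual cone $\K^*$, and there the lower-dimensional pieces may be discarded because their duals contain lines and are thus in the kernel of $\sigma$; dualizing the surviving full-dimensional unimodular pieces back gives the decomposition of $\K$. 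You mention Brion in passing but never invoke dualization, and without it the identity $\sigma_{\K^v} = \sum_i \epsilon_i \sigma_{\K_i^v}$ you write down is not justified. The fix is to insert the polarization step before the replacement recursion and to verify (as the paper notes) that $\sigma_{\K^*} = \sum_i \epsilon_i \sigma_{\K_i^*}$ is equivalent to $\sigma_\K = \sum_i \epsilon_i \sigma_{\K_i}$, after which your vertex-cone upgrade via Proposition \ref{prop-VCone} goes through as you describe.
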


\subsection{LattE's implementation of Barvinok's decomposition}\label{ss-LattEBarv}
Barvinok's decomposition of $\K=C(\bar A)$ is based on a nonzero lattice point $\gamma$ in the closed parallelepiped
\[
\Gamma =\{k_1\bar \alpha_1+ \cdots+k_d\bar \alpha_d: ||(k_1,\dots, k_d)||_\infty \leq (\ind(\K))^{-\frac{1}{d}} \}.
\]
The existence of such $\gamma$ is guaranteed by  Minkowski's First Theorem \cite{book-Minkowski} because $\Gamma \subset \RR^d$ is a centrally symmetric convex body with volume $\geq 2^d$.

The $\gamma$ is not easy to find. It is sufficient to find a smallest vector $\beta=(k_1,\dots, k_d)^T$
 in $\bar{A}^{-1}\ZZ^d$ and then set $\gamma=\bar A\beta$. The smallest vector would produce better decomposition, but the enumeration step is costly.
LattE simply computes the reduced basis of $\bar{A}^{-1}\ZZ^d$ and set $\beta$ to be the smallest vector among them. If the condition
$||\beta||_\infty \leq (\ind(\K))^{-\frac{1}{d}}$ holds true, then $\gamma \in \Gamma$ as desired; otherwise, one needs the costly enumeration step to find $\beta$.
LattE reported that the enumeration step is rarely used in practice.

Once $\gamma$ (which has to be primitive) is found. Then one can decompose the cone $\K=C(\bar A)$ into a signed combination of the cones $\K_i=C(\bar A_i)$, where
\[
\bar A_i=(\bar \alpha_1\mid \cdots \mid \bar \alpha_{i-1} \mid \gamma \mid\bar \alpha_{i+1}\mid \cdots \mid \bar \alpha_d),
\]
and $\ind(\K_i)=|\det(\bar A_i)|=|k_i \det(\bar A)|\leq \ind(\K)^{\frac{d-1}{d}}.$
The signs depend on the position of $\gamma$ with respect to interior or exterior of $\K$. Note that
if $\gamma$ and $\alpha_1,\dots,\alpha_d$
do not belong to an open half-space, then we shall use $-\gamma$ instead (see Lemma 5.2 \cite{1994-Barvinok}).
Note also that in Barvinok's original work, one also needs to keep tract of (plenty of) lower dimensional cones to decompose $\sigma_{\K}$.

Brion's \cite{1988-Brion-lowerdim} polarization trick allows us to ignore lower dimensional cones.
Denote by $\K^*:=\{\alpha \in \RR^d : \langle \alpha, \beta \rangle \ge 0 \text{ for each } \beta \in \K\}$ the dual cone of $\K$ \cite{1999-Barvinok}.
We only need the following four properties:

i) The dual is an involution, i.e., $(\K^*)^*=\K$;

ii) If $\dim(\K) <d$, then its dual cone $\K^*$ contains a line so that $\sigma_{\K^*}$ is treated as $0$;

iii) If $\K=C(A)$ is full dimensional, then $\K^*$ is generated by the columns of $(A^{-1})^T$;

iv) $\sigma_{\K^*}=\sum_{i} \epsilon_i \sigma_{\K_i^*} \Leftrightarrow \sigma_{\K}=\sum_{i} \epsilon_i \sigma_{\K_i}$.

Thus we can first find a decomposition of $\K^*$ (ignore low dimensional cones) and then dual back to obtain a decomposition of $\K$.

For clarity, we give the pseudocode of the algorithm, which modifies Algorithm 5 in \cite{2004-LattE}.
For a matrix $A$, we shall abbreviate cone $C(A)$ as $A$. This will cause no ambiguity.

\begin{algorithm}[H]\label{alg-barv}
\DontPrintSemicolon
\KwInput{A $d\times d$ nonsingular integer matrix $A$ defining $\K=C(A)$.}
\KwOutput{A list of $(\epsilon_i,A_i)$ as described in Theorem \ref{theo-polytime-barv} in the $v=\mathbf{0}$ case.}

\If{$\bar A$ is unimodular}
{
\Return $[(1,\bar A)]$.
}
\Else
{
    Compute $B$ with primitive column vectors such that $\K^*=C(B)$.

    Set two lists $\Uni=[~]$ and $\NUni=[(1,B)]$.

    \While{$\NUni\neq [~]$}
    {
        Take the last element $(\epsilon_B, B) \in \NUni$ and remove $(\epsilon_B, B)$ from $\NUni$.

        Compute the smallest vector $\beta=(k_1,\dots, k_d)^T$ in $L(B^{-1})$.
        If $k_i\leq 0$ for all $i$, then set $\beta := -\beta$.
        Set $\gamma = B\beta$.

        \For{$i$ \rm{from} $1$ \rm{to} $d$}
        {
            \If{$k_i \neq 0$}
            {
                Set  $B_i$ to be obtained from $B$ by replacing the $i$-th column by $\gamma$.

                \If{$k_i \det(B)=\pm 1$}
                {
                    Compute $A_i=(B_i^{-1})^T$ and add $(\sgn(k_i)\cdot \epsilon_B, A_i)$ to $\Uni$.
                }
                \Else
                {
                    Add $(\sgn (k_i)\cdot\epsilon_B, B_i)$ to $\NUni$.
                }
            }
        }
    }
}
\Return $\Uni$.

\caption{Barvinok's unimodular cone decomposition}
\end{algorithm}

\subsection{An example by a denumerant cone}\label{ss-exam}
We are interested with a special type of simplicial cone, called \emph{denumerant cone} in \cite{2023-Xin-knapsackcone}, denoted
\begin{equation*}\label{e-def-dcone}
\D_j(a_1, \dots, a_n) := \Big\{ \sum_{i=1,i \neq j}^n k_i (-a_i e_j + a_j e_i) : k_i \in \RR_{\ge 0}\Big\},
\end{equation*}
where $a_1,\dots, a_n$ are positive integers with $\gcd(a_1,\dots,a_n) = 1$, and $e_i$ is the standard $i$-th unit vector for each $i$.
It is also called a \emph{knapsack cone} since it arises from the knapsack problem $a_1x_1+\cdots +a_n x_n=b$.
We focus on $\D_1(a_1,\dots,a_n) = C(H)$, where
\[
H = \left(\begin{array}{cccc}
-a_2 & -a_3 & \cdots & -a_n \\
a_1 & 0 & \cdots & 0 \\
0 & a_1 & \cdots & 0 \\
\vdots & \vdots & \ddots & \vdots \\
0 & 0 & \cdots & a_1
\end{array}\right).
\]
This is not a full dimensional cone and the columns of $H$ may not be primitive.
To apply Theorem \ref{theo-polytime-barv}, we need to transform $C(H)$ into a full dimensional cone.
We need a lattice basis of $L(H)$. Such a basis can be obtained by Hermit normal form.
Here we use Smith normal form, say $ V^{-1} (a_1,\dots,a_n) U^{-1}=(1,0,\dots,0)$, where $U$ and $V$ are unimodular matrices.
Thus
\[
(a_1,\dots,a_n)\alpha = 0 \Leftrightarrow V(1,0,\dots,0)U\alpha = 0  \Leftrightarrow (1,0,\dots,0)U\alpha = 0.
\]
Clearly the unit vectors $e_2, \dots, e_n$ form a $\ZZ$-basis of the null space of $(1,0,\dots,0)$. It follows that
$U^{-1}e_i \ (2\le i \le n)$ form a $\ZZ$-basis of the null space of $(a_1,\dots,a_n)$, i.e., a lattice basis of $L(H)$.
Moreover, we have
 $U H= \left(\begin{array}{c} \mathbf{0}^T \\ H' \end{array}\right)$.
Thus we have transformed $C(H)$ into the full dimensional cone
$C(H')$. Then unimodular transformation $U$ gives rise the isomorphism from $C(H)$ to $C(H')$.

Applying Theorem \ref{theo-polytime-barv} to $C(H')$ gives a unimodular cone decomposition:
\[
\sigma_{C(H')}(\y) = \sum_{i \in I} \epsilon_i \sigma_{C(B_i)}(\y),
\]
where $B_i$ is unimodular for each $i \in I$.
Then we have
\[
\sigma_{C(H)}(\y) = \sum_{i \in I} \epsilon_i \sigma_{C(A_i)}(\y),
\]
where $A_i=U^{-1} \left(\begin{array}{c} \mathbf{0}^T \\ B_i \end{array}\right)$ for each $i \in I$.

For the sake of clarity, we illustrate by the following example.
\begin{exam} \label{ex-barv}
Consider the denumerant cone $\D_{1}(10, 11, 5, 17) = C(H)$, where
\[
H=\left( \begin {array}{ccc}
        -11 & -5 & -17\\
        10 & 0 & 0\\
        0 & 10 & 0\\
        0 & 0 & 10
        \end {array}
        \right).
\]
Taking unimodular matrix
 $U= \left( \begin{array}{cccc}
     10 & 11 & 5 & 17\\
     -1 & -1 & -1 & -2\\
      0 & 0 & 1 & 0\\
      0 & 0 & 0 &1
    \end{array} \right)$,
we have $UH = \left(\begin{array}{c} \mathbf{0}^T \\ A \end{array} \right)$,
where $A= \left( \begin {array}{ccc}
        1 & -5 & -3\\
        0 & 10 & 0\\
        0 & 0 & 10
    \end {array}\right)$.
Now we apply Algorithm \ref{alg-barv} to $A$.

(1) $\bar A$ is not unimodular. The dual cone of $A$ is
 $B
 = \left( \begin {array}{ccc}
     10 & 0 & 0\\
     5 & 1 & 0\\
     3 & 0 & 1
 \end {array} \right)$.

Set two lists $\Uni=[~]$ and $\NUni=[(1,B)]$.
Use LLL's algorithm to find the reduced basis of $B^{-1}$, denoted
$B'=\left( \begin {array}{ccc}
2/5 & -1/5 & 1/10 \\
0 & 0 & -1/2 \\
-1/5 & -2/5 & -3/10
\end {array} \right)$.
Both the first and second columns of $B'$ can be chosen as the smallest vector $\beta$.
Here we take $\beta =(-1/5, 0, -2/5)^T$. Since $\beta$ has no positive entries, we set $\beta = (1/5, 0, 2/5)^T$ and then $\gamma = B\beta =(2, 1, 1)^T.$
Replacing the $i$-th column of $B$ by $\gamma$ for $i=1, 3$ respectively gives two cones:
\[
B_1=\left( \begin {array}{ccc}
     2 & 0 & 0\\
     1 & 1 & 0\\
     1 & 0 & 1
 \end {array} \right),
B_3=\left( \begin {array}{ccc}
     10 & 0 & 2\\
     5 & 1 & 1\\
     3 & 0 & 1
 \end {array} \right)
\]
with signs $\epsilon_{B_1} = \epsilon_{B_3} = 1$.
Both of them are not unimodular and we have
\[
\NUni=[(1,B_1), (1,B_3)], \;\Uni = [~].
\]

The remaining steps are similar, we only show the results.

(2) Now $\NUni=[(1,B_1), (1,B_3)]$. Taking $(1,B_3)$ gives three cones:
\[
B_{3,1}= \left( \begin {array}{ccc}
     -3 & 0 & 2\\
     -1 & 1 & 1\\
     -1 & 0 & 1
\end {array} \right),
B_{3,2} =\left( \begin {array}{ccc}
     10 & -3 & 2\\
     5 & -1 & 1\\
     3 & -1 & 1
\end {array} \right),
B_{3,3} = \left( \begin {array}{ccc}
     10 & 0 & -3\\
     5 & 1 & -1\\
     3 & 0 & -1
\end {array}\right)
\]
with signs $\epsilon_{B_{3,1}} = \epsilon_{B_{3,3}} = -1$, $\epsilon_{B_{3,2}} = 1$.
Both $B_{3,1}$ and $B_{3,3}$ are unimodular but $B_{3,2}$ is not.
Then we have
\[
\NUni = [(1, B_1), (1, B_{3,2})], \;
\Uni = [(-1, A_1), (-1, A_2)],
\]
where $A_1 = (B_{3,1}^{-1})^T$, $A_2 = (B_{3,3}^{-1})^T$.

(3) Now $\NUni = [(1, B_1), (1, B_{3,2})]$. Taking $(1, B_{3,2})$ gives two cones:
\[
B_{3,2,1}= \left( \begin {array}{ccc}
     6 & -3 & 2\\
     3 & -1 & 1\\
     2 & -1 & 1
\end {array} \right),
B_{3,2,3} =\left( \begin {array}{ccc}
     10 & -3 & 6\\
     5 & -1 & 3\\
     3 & -1 & 2
\end {array} \right)
\]
with signs $\epsilon_{B_{3,2,1}} = \epsilon_{B_{3,2,3}} = 1$.
Both of them are unimodular and we have
\[
\NUni = [(1, B_1)],\;
\Uni=[(-1, A_1), (-1, A_2), (1, A_3), (1, A_4)],
\]
where $A_3 = (B_{3,2,1}^{-1})^T$, $A_4 = (B_{3,2,3}^{-1})^T$.

(4) Now $\NUni = [(1, B_1)]$. Taking $(1,B_1)$ gives three cones:
\[
B_{1,1}= \left( \begin {array}{ccc}
     1 & 0 & 0\\
     0 & 1 & 0\\
     1 & 0 & 1
\end {array} \right),
B_{1,2} =\left( \begin {array}{ccc}
     2 & 1 & 0\\
     1 & 0 & 0\\
     1 & 1 & 1
\end {array} \right),
B_{1,3} = \left( \begin {array}{ccc}
     2 & 0 & 1\\
     1 & 1 & 0\\
     1 & 0 & 1
\end {array}\right)
\]
with signs $\epsilon_{B_{1,1}} = \epsilon_{B_{1,3}} = 1$, $\epsilon_{B_{1,2}} = -1$.
All of them are unimodular and we have
\[
\NUni = [~],\;
\Uni=[(-1, A_1), (-1, A_2), (1, A_3), (1, A_4), (1, A_5), (-1, A_6), (1, A_7)],
\]
where $A_5 = (B_{1,1}^{-1})^T$, $A_6 = (B_{1,2}^{-1})^T$, $A_7 = (B_{1,3}^{-1})^T$.

(5) Now $\NUni = [~]$ and the loop ends. Output $\Uni$.

Finally, for elements in $\Uni$, let
\[
D_i = U^{-1} \left( \begin{array}{c} \mathbf{0}^T \\ A_i \end{array} \right), \quad
\epsilon_i = \epsilon_{A_i} \quad
\text{ for } \quad 1\le i \le 7.
\]
Then the list of elements $(\epsilon_i, D_i), \ 1\le i \le 7$ is the decomposition of  $\D_{1}(10, 11, 5, 17)$.
\end{exam}

\subsection{A constant term concept and denumerant cone}
We need a basic constant term concept in \cite{2015-Xin-CTEuclid}.
Suppose an Elliott rational function $E=E(\lambda)$ is written in the following form:
\[
E= \frac{L(\lambda)}{\prod_{i=1}^{n} (1 - u_{i} \lambda^{a_i})},
\]
where $L(\lambda)$ is a Laurent polynomial, $u_i$'s are free of $\lambda$ and $a_i$'s are positive integers.
Assume we have the following partial fraction decomposition of $E$ with respect to $\lambda$:
\[
E = P(\lambda) + \frac{p(\lambda)}{\lambda^k} + \sum_{i=1}^{n} \frac{A_i(\lambda)}{1 - u_{i} \lambda^{a_i}},
\]
where $P(\lambda)$, $p(\lambda)$ and the $A_i(\lambda)$'s are all polynomials, $\deg p(\lambda) < k$ and $\deg A_i(\lambda) < a_i$ for all $i$.
We denote by
\begin{equation}\label{e-siglecontri}
\CT_\lambda \frac{1}{\underline{1 - u_{i_0} \lambda^{a_{i_0}}}} \frac{L(\lambda)}{\prod_{i=1, i \neq i_0}^{n} (1 - u_{i} \lambda^{a_i})} := A_{i_0}(0).
\end{equation}
This is a basic building block in the \texttt{CTEuclid} algorithm in \cite{2015-Xin-CTEuclid}.

We need the following two results. See \cite{2023-Xin-CTCone} for the detailed proofs.

\begin{lem}\label{lem-cone-1}
Suppose $R(\lambda)$ is a rational function and $f$ is a positive integer.
If $R(\zeta)$ exist for all $\zeta$ satisfying $\zeta^f=1$, then
\[
\CT_\lambda \frac{1}{\underline{1-\lambda^f}} R(\lambda) = \frac{1}{f}\sum_{\zeta: \zeta^f=1} R(\zeta).
\]
\end{lem}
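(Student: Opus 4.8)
The plan is to compute the partial fraction decomposition of $R(\lambda)$ with respect to the factor $1-\lambda^f$ explicitly, and then identify the term defined by \eqref{e-siglecontri} as an average over the $f$-th roots of unity. First I would write the partial fraction expansion of $R(\lambda)$ in the form dictated by the definition \eqref{e-siglecontri}: since all the poles $\zeta$ with $\zeta^f=1$ are simple (they occur only through the single factor $1-\lambda^f = \prod_{\zeta^f=1}(1-\lambda/\zeta)$ up to a unit, and by hypothesis $R$ has no other pole there), we have
\[
R(\lambda) = \widetilde{R}(\lambda) + \frac{A(\lambda)}{1-\lambda^f},
\]
where $\widetilde{R}(\lambda)$ collects the polynomial part, the part with a pole at $0$, and the contributions of the poles coming from the remaining factors of $R$, and $\deg A < f$. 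By definition, $\CT_\lambda \frac{1}{\underline{1-\lambda^f}} R(\lambda) = A(0)$.

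The key step is then to evaluate $A(0)$ by sampling at roots of unity. For each $\zeta$ with $\zeta^f = 1$, multiply through by $1-\lambda^f$ and take $\lambda\to\zeta$: since $\widetilde{R}$ is regular at $\zeta$, the factor $1-\lambda^f$ kills its contribution, and one gets
\[
\lim_{\lambda\to\zeta}(1-\lambda^f)R(\lambda) = A(\zeta).
\]
Computing the left-hand side by L'Hôpital (or by noting $1-\lambda^f = -f\zeta^{-1}(\lambda-\zeta)+O((\lambda-\zeta)^2)$) expresses $A(\zeta)$ in terms of the residue of $R$ at $\zeta$. Now I would use the standard fact that a polynomial of degree less than $f$ is determined by its values at the $f$-th roots of unity, and in particular its constant term is the average of those values: writing $A(\lambda) = \sum_{j=0}^{f-1} c_j \lambda^j$, the orthogonality relation $\sum_{\zeta^f=1}\zeta^j = f\,[f\mid j]$ gives $\sum_{\zeta^f=1} A(\zeta) = f c_0 = f A(0)$. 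Combining, $A(0) = \frac1f \sum_{\zeta^f=1} A(\zeta) = \frac1f\sum_{\zeta^f=1} R(\zeta)$, where in the last equality I use that $A(\zeta) = R(\zeta)$ is false in general — rather, I must track that $\widetilde R(\zeta)$ may be nonzero, so the correct identity to push through is $A(\zeta) = \lim_{\lambda\to\zeta}(1-\lambda^f)R(\lambda)$, and then separately argue that averaging these limits reproduces $\sum_{\zeta^f=1} R(\zeta)$ once one accounts for the regular part; alternatively, and more cleanly, expand $R$ itself in partial fractions so that $R(\zeta) = \widetilde R(\zeta) + \lim_{\mu\to\zeta}\frac{A(\mu)}{1-\mu^f}$, which diverges — so the honest route is the residue bookkeeping just described.

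The main obstacle is handling this regular-versus-singular bookkeeping carefully: one must be sure that the averaging $\frac1f\sum_\zeta(\cdot)$ applied to the quantities $\lim_{\lambda\to\zeta}(1-\lambda^f)R(\lambda)$ genuinely equals $\frac1f\sum_\zeta R(\zeta)$, which requires knowing that $R(\zeta)$ exists (the hypothesis) and relating $R(\zeta)$ to $A(\zeta)$ via $R(\zeta) = \widetilde R(\zeta) + [\,\text{finite value of } A(\lambda)/(1-\lambda^f) \text{ at } \zeta\,]$ — but $A(\lambda)/(1-\lambda^f)$ is singular at $\zeta$ unless $A(\zeta)=0$, so in fact the hypothesis ``$R(\zeta)$ exists'' forces a cancellation between $\widetilde R$ and the singular term that I must make explicit. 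The cleanest fix is to run the argument on $G(\lambda) := (1-\lambda^f) R(\lambda)$: this is regular at every $\zeta^f=1$, its values there are $G(\zeta) = \lim_{\lambda\to\zeta}(1-\lambda^f)R(\lambda) = A(\zeta)$ (now with no caveat, since $G = (1-\lambda^f)\widetilde R + A$ and $(1-\lambda^f)\widetilde R$ vanishes at $\zeta$), and meanwhile $A$ is the unique degree-$<f$ polynomial interpolating $G$ at the $f$-th roots of unity, so $A(0) = \frac1f\sum_{\zeta^f=1} G(\zeta) = \frac1f\sum_{\zeta^f=1}(1-\zeta^f)R(\zeta)$ — wait, $(1-\zeta^f)=0$. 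This is precisely the subtlety: $G(\zeta)\neq (1-\zeta^f)R(\zeta)$ because $R$ has a pole there. So I would instead define $G$ as the polynomial $A$ directly via $A(\lambda) = (1-\lambda^f)R(\lambda) - (1-\lambda^f)\widetilde R(\lambda)$ and compute $A(\zeta) = -f\zeta^{-1}\operatorname{res}_{\lambda=\zeta}R(\lambda)\cdot(-\zeta/f)\cdot\!\ldots$; the upshot, which the reader of \cite{2023-Xin-CTCone} can supply in full, is the stated clean formula $A(0) = \frac1f\sum_{\zeta^f=1} R(\zeta)$, and I expect the residue computation relating $A(\zeta)$ to $\operatorname{res}_{\lambda=\zeta}R$ together with the Fourier-orthogonality extraction of $A(0)$ to be the technical heart.
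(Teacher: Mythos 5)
Your plan has the right ingredients (partial fractions, the defining identity $\CT = A(0)$, and Fourier orthogonality over $f$-th roots of unity), but there is a genuine gap coming from one setup error that then propagates into the circular bookkeeping in the second half. The definition \eqref{e-siglecontri} takes the partial fraction decomposition of the \emph{whole} Elliott rational function including the underlined factor; here that is $E(\lambda) = \frac{R(\lambda)}{1-\lambda^f}$, not $R(\lambda)$. So the decomposition you should write is
\[
\frac{R(\lambda)}{1-\lambda^f} \;=\; \widetilde R(\lambda) \;+\; \frac{A(\lambda)}{1-\lambda^f}, \qquad \deg A < f,
\]
not $R(\lambda) = \widetilde R(\lambda) + \frac{A(\lambda)}{1-\lambda^f}$. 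Clearing denominators gives the polynomial-plus-regular identity $R(\lambda) = (1-\lambda^f)\widetilde R(\lambda) + A(\lambda)$. Since by hypothesis $R$ is \emph{regular} at every $\zeta$ with $\zeta^f = 1$ (this is exactly what ``$R(\zeta)$ exists'' gives you), and $\widetilde R$ is regular there as well, one may simply evaluate this identity at $\lambda = \zeta$: the term $(1-\zeta^f)\widetilde R(\zeta)$ vanishes and you get $A(\zeta) = R(\zeta)$ with no caveat. No L'H\^opital, no residue of $R$ at $\zeta$ (which would be $0$ anyway, since $R$ has no pole there), and no cancellation to engineer. The interpolation/orthogonality step you already have then finishes it: $A(0) = \frac1f\sum_{\zeta^f=1} A(\zeta) = \frac1f\sum_{\zeta^f=1} R(\zeta)$.

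Concretely, two of your intermediate claims are wrong and stem from conflating $R$ with $R/(1-\lambda^f)$: (i) you say ``$A(\zeta)=R(\zeta)$ is false in general'' --- it is in fact true, and is the heart of the proof; (ii) you try to relate $A(\zeta)$ to $\operatorname{res}_{\lambda=\zeta}R$ and to $\lim_{\lambda\to\zeta}(1-\lambda^f)R(\lambda)$, but $R$ has no pole at $\zeta$, so that residue is $0$ and that limit is $0$, and neither equals $A(\zeta)$. The ``subtlety'' you flag with $G(\lambda) := (1-\lambda^f)R(\lambda)$ and $(1-\zeta^f)=0$ is an artifact of the same conflation: the correct $G$ to extract is $(1-\lambda^f)\cdot\frac{R(\lambda)}{1-\lambda^f} = R(\lambda)$ itself. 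Once the decomposition target is corrected, the proof you sketched collapses to the clean three-line argument above, and the ``technical heart'' you defer to the reference is in fact trivial.
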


\begin{lem}\label{lem-cone-2}
Let $a_1,\dots, a_d$ and $f$ be positive integers satisfying $\gcd(f, a_1, \dots, a_d)=1$, and $b$ be an integer.
Then the constant term
\begin{equation}\label{e-cone-2}
  \CT_\lambda \frac{1}{\underline{1- \lambda^f y_0}} \frac{\lambda^{-b}}{(1- \lambda^{a_1} y_1)\cdots (1- \lambda^{a_d} y_d)}
\end{equation}
enumerates the lattice points of the vertex simplicial cone $v+C(H)$, where
\[
v=\left(\begin{array}{c}
    \frac{b}{f} \\
    0 \\
    \vdots \\
    0
    \end{array}\right) \in \QQ^{d+1}, \quad
H = \left(\begin{array}{cccc}
    -a_1 & -a_2 & \cdots & -a_d \\
    f & 0 & \cdots & 0\\
    0 & f & \cdots & 0 \\
    \vdots & \vdots & \ddots & \vdots \\
    0 & 0 & \cdots & f
    \end{array}\right) \in \ZZ^{(d+1)\times d}.
\]
\end{lem}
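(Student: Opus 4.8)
The plan is to start from the constant term in \eqref{e-cone-2} and expand it as an explicit lattice-point generating function, matching it term by term with $\sigma_{v+C(H)}(\y)$. First I would apply Lemma \ref{lem-cone-1} to the outer variable: writing $R(\lambda) = \frac{\lambda^{-b}}{(1-\lambda^{a_1}y_1)\cdots(1-\lambda^{a_d}y_d)}$, which is analytic at every $f$-th root of unity because $\gcd(f,a_1,\dots,a_d)=1$ forces $\lambda^{a_i} \neq$ pole for the relevant expansion, we get
\[
\CT_\lambda \frac{1}{\underline{1-\lambda^f y_0}} R(\lambda)
= \text{(a sum over }\zeta^f=1\text{ extracting the }\lambda\text{-constant term)}.
\]
More directly, I would interpret the underlined constant term via its defining partial-fraction meaning \eqref{e-siglecontri}: the factor $\frac{1}{1-\lambda^f y_0}$ contributes those monomials $\lambda^{fk_0}y_0^{k_0}$ with $k_0\ge 0$, while each $\frac{1}{1-\lambda^{a_i}y_i}$ contributes $\lambda^{a_ik_i}y_i^{k_i}$ with $k_i\ge 0$, and the $\lambda^{-b}$ shifts the exponent. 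Taking the constant term in $\lambda$ then imposes the single linear constraint $fk_0 + a_1k_1+\cdots+a_dk_d = b$ on the exponent vector $(k_0,k_1,\dots,k_d)\in\ZZ_{\ge0}^{d+1}$, and the surviving sum is $\sum y_0^{k_0}y_1^{k_1}\cdots y_d^{k_d}$ over all such solutions.

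Next I would identify this solution set with $(v+C(H))\cap\ZZ^{d+1}$. The key observation is that a point $\alpha=(\alpha_0,\alpha_1,\dots,\alpha_d)\in v+C(H)$ means $\alpha = v + \sum_{i=1}^d k_i\, h_i$ with $k_i\ge 0$, where $h_i$ is the $i$-th column of $H$, i.e. $h_i = -a_i e_1 + f e_{i+1}$ (indexing coordinates $0,1,\dots,d$). Writing this out coordinatewise: $\alpha_0 = \frac{b}{f} - \sum_i a_i k_i$ and $\alpha_{i} = f k_i$ for $i=1,\dots,d$. For $\alpha$ to be an integer point we need $k_i = \alpha_i/f \in \tfrac1f\ZZ_{\ge0}$ and $\alpha_0 = \tfrac{b}{f} - \tfrac1f\sum_i a_i\alpha_i \in \ZZ$; multiplying through by $f$, setting $k_0 := \alpha_0$, this is exactly the condition $f k_0 + \sum_i a_i(\alpha_i) = b$ — no, more carefully, one parametrizes by the $k_i$ directly and sees that integer points of $v+C(H)$ are in bijection with nonnegative integer tuples $(k_1,\dots,k_d)$ together with the derived value $k_0 = \tfrac1f(b - \sum a_i \cdot f k_i/f)$... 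I would set up the substitution $\y = (y_0, y_1, \dots, y_d)$ so that the monomial $\y^{\alpha}$ attached to the lattice point $\alpha$ matches the monomial $y_0^{k_0}y_1^{k_1}\cdots y_d^{k_d}$ coming from the constant-term expansion; the correct dictionary is forced by reading off exponents, and checking it is the crux of the argument. I would present this bijection cleanly and then note that it is monomial-preserving, so the two generating functions coincide.

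The main obstacle, and the place deserving the most care, is the bookkeeping of the affine shift $v$ and the non-primitivity/non-unimodularity of $H$: one must verify that the "constant term in $\lambda$" condition produces precisely the $\ZZ^{d+1}$ points of the shifted cone — including getting the role of the $\lambda^{-b}$ shift right (it is what makes the apex sit at $v$ rather than at the origin) and confirming that every integer point of $v+C(H)$ arises, with no spurious points and none omitted, even though $v\notin\ZZ^{d+1}$ in general. A secondary subtlety is justifying that the underlined constant-term operator \eqref{e-siglecontri} is well-defined here, i.e. that the partial fraction decomposition in $\lambda$ separating the factor $1-\lambda^f y_0$ exists; this follows because $\gcd(f,a_1,\dots,a_d)=1$ guarantees $1-\lambda^f y_0$ shares no common root (in $\lambda$, treating $y_i$ as indeterminates) with the other denominator factors, so Lemma \ref{lem-cone-1} applies and the whole expression is a legitimate rational function in the $y_i$'s. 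Once these points are settled, matching the two series is routine, and I would close by remarking that in the special case $b=0$, $f=1$ this recovers the plain generating function of the denumerant cone $\D_1(a_1,\dots,a_d)$ used in Section \ref{ss-exam}.
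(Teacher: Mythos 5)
Your proposal misreads the operator defined in \eqref{e-siglecontri}. The ``underlined constant term'' is \emph{not} obtained by expanding every factor $\frac{1}{1-u_i\lambda^{a_i}}$ as a geometric series in nonnegative powers of $\lambda$ and then extracting the coefficient of $\lambda^0$. That recipe produces the \emph{full} constant term, which by the definition equals $P(0)+\sum_i A_i(0)$, whereas the underlined operator isolates the single summand $A_{i_0}(0)$ from the partial fraction decomposition. A minimal example already shows the two differ: for $d=1$, $f=a_1=1$, $b=0$, partial fractions give
\[
\frac{1}{(1-\lambda y_0)(1-\lambda y_1)} = \frac{1}{1-y_1/y_0}\cdot\frac{1}{1-\lambda y_0} + \frac{1}{1-y_0/y_1}\cdot\frac{1}{1-\lambda y_1},
\]
so $\CT_\lambda \frac{1}{\underline{1-\lambda y_0}}\frac{1}{1-\lambda y_1} = \frac{1}{1-y_1/y_0}$, an infinite geometric series. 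Your ``expand-everything'' recipe instead imposes $k_0+k_1=0$ with $k_0,k_1\ge0$ and returns $1$. The correct answer is the generating function of the cone generated by $(-1,1)$, which contains the infinitely many lattice points $(-n,n)$, $n\ge 0$; the value $1$ is wrong.

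Consequently the claimed constraint ``$fk_0+a_1k_1+\cdots+a_dk_d=b$ with all $k_j\ge 0$'' is not the lattice-point description of $v+C(H)$. Writing a point as $\alpha = v+\sum_i t_ih_i$ with $t_i\ge 0$ forces $\alpha_i = ft_i\ge 0$ for $i=1,\dots,d$ and $\alpha_0 = (b-\sum_i a_i\alpha_i)/f$, so the first coordinate is bounded above but \emph{not} below; your added requirement $k_0\ge 0$ turns an unbounded cone into a bounded polytope and would yield a polynomial where a nontrivial rational function is needed. Two smaller issues: (i) the attempted appeal to Lemma \ref{lem-cone-1} does not apply as written, since that lemma treats $\frac{1}{\underline{1-\lambda^f}}$ and not $\frac{1}{\underline{1-\lambda^f y_0}}$; and (ii) the bijection between lattice points and monomials, which you correctly flag as ``the crux,'' is left unfinished. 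The paper itself defers the proof of this lemma to the reference \cite{2023-Xin-CTCone}; a correct argument must work with the actual partial-fraction (equivalently, residue-sum) meaning of the underlined CT, or with a Laurent expansion in which the underlined factor is expanded in nonnegative powers of $\lambda$ and the non-underlined factors in \emph{negative} powers of $\lambda$, and then identify the resulting rational function with $\sigma_{v+C(H)}(\y)$. As written, your argument would establish a different (and false) identity.
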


The cone $C(H)$ is actually the denumerant cone $\D_1(f,a_1,\dots,a_d)$.
Lemmas \ref{lem-cone-1} and \ref{lem-cone-2} reduce the computation of $E_m(\a; f, T)$ to that of
$\sigma_{v+C(H)}(\y)$.
And by Theorem \ref{theo-polytime-barv},
we can write \eqref{e-cone-2} as a short sum of rational functions.
\begin{lem}\label{lem-cone-3}
Following notation in Lemma \ref{lem-cone-2}, we have
\[
  \CT_\lambda \frac{1}{\underline{1- \lambda^f y_0}} \frac{\lambda^{-b}}{(1- \lambda^{a_1} y_1)\cdots (1- \lambda^{a_d} y_d)}
  = \sum_{i \in I} \epsilon_i \frac{\y^{\alpha_{i0}}}{\prod_{j=1}^{d} (1 - \y^{\alpha_{ij}})},
\]
where $I$ is the index set, $\epsilon_i \in \{1, -1\}$, $\alpha_{i1}, \dots, \alpha_{id}$ generate a unimodular cone for each $i$,
and $\alpha_{i0} = \gamma_i + \sum_{j=1}^{d} \lceil k_{ij} \rceil \alpha_{ij}$
by writing $v = \gamma_i + \sum_{j=1}^{d} k_{ij}  \alpha_{ij}$ with $\gamma_i \in \ZZ^{d+1}, k_{ij} \in \QQ$.
\end{lem}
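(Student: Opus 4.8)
The plan is to combine the three ingredients already assembled in this section: Lemma~\ref{lem-cone-2}, which identifies the constant term on the left with the generating function $\sigma_{v+C(H)}(\y)$ of the shifted denumerant cone, Theorem~\ref{theo-polytime-barv}, which decomposes a vertex cone's generating function into a short signed sum of unimodular vertex cone generating functions, and Proposition~\ref{prop-VCone}, which gives the closed form for the generating function of a unimodular \emph{vertex} cone. The statement of Lemma~\ref{lem-cone-3} is essentially just the concatenation of these three facts, so the proof is a matter of bookkeeping rather than new mathematics.

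Here is the order I would carry it out. First, invoke Lemma~\ref{lem-cone-2} to rewrite the constant term as $\sigma_{v+C(H)}(\y)$, where $v$ and $H$ are exactly as in that lemma; note $C(H)=\D_1(f,a_1,\dots,a_d)$ lives in $\RR^{d+1}$ but is only $d$-dimensional, so as in Section~\ref{ss-exam} one passes through a unimodular change of coordinates $U$ to a full-dimensional cone $C(H')$ in $\RR^d$, applies Barvinok's algorithm (Algorithm~\ref{alg-barv}) there, and pulls the decomposition back via $U^{-1}$. This produces unimodular cones $C(A_i)\subset\RR^{d+1}$ with generators $\alpha_{i1},\dots,\alpha_{id}$ and signs $\epsilon_i$ such that $\sigma_{C(H)}(\y)=\sum_{i\in I}\epsilon_i\sigma_{C(A_i)}(\y)$. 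Second, since Theorem~\ref{theo-polytime-barv} is stated for vertex cones and the identity $\sigma_{\K^v}=\sum_i\epsilon_i\sigma_{\K_i^v}$ holds simultaneously for \emph{all} $v\in\RR^{d+1}$, the same signs and cones give $\sigma_{v+C(H)}(\y)=\sum_{i\in I}\epsilon_i\sigma_{v+C(A_i)}(\y)$ for our particular $v$. Third, apply Proposition~\ref{prop-VCone} to each unimodular vertex cone $v+C(A_i)$: writing $v=\sum_{j=1}^d k_{ij}\alpha_{ij}$ in terms of the (now unimodular) basis, the proposition yields $\sigma_{v+C(A_i)}(\y)=\y^{\sum_j\lceil k_{ij}\rceil\alpha_{ij}}/\prod_{j=1}^d(1-\y^{\alpha_{ij}})$. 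Setting $\alpha_{i0}:=\sum_{j=1}^d\lceil k_{ij}\rceil\alpha_{ij}$ and summing over $i$ gives the claimed formula.

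One technical point deserves care: $v$ need not have integer coordinates (indeed $v=(b/f,0,\dots,0)^T$), so $v$ is genuinely a rational shift, and the coefficients $k_{ij}$ in $v=\sum_j k_{ij}\alpha_{ij}$ are rational. Proposition~\ref{prop-VCone} is stated precisely for this situation, and the integrality of $\alpha_{i0}$ is automatic because $\lceil k_{ij}\rceil\in\ZZ$ and each $\alpha_{ij}\in\ZZ^{d+1}$. The formulation in the statement of Lemma~\ref{lem-cone-3} phrases this slightly redundantly as $v=\gamma_i+\sum_j k_{ij}\alpha_{ij}$ with $\gamma_i\in\ZZ^{d+1}$; one may simply take $\gamma_i=\mathbf{0}$, or absorb any integer part one wishes into $\gamma_i$—either way $\alpha_{i0}=\gamma_i+\sum_j\lceil k_{ij}\rceil\alpha_{ij}$ is the exponent produced. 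I would remark that this extra $\gamma_i$ is retained only because in the actual implementation the coordinate change $U^{-1}$ applied to the exponent vectors naturally splits off an integer translation.

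The main obstacle—and it is a mild one—is keeping the ambient dimensions and the coordinate change straight: Barvinok's decomposition is performed on the full-dimensional cone $C(H')\subset\RR^d$, but the generators $\alpha_{ij}$ and the exponents $\alpha_{i0}$ that appear in the final formula live in $\ZZ^{d+1}$ after applying $U^{-1}$ to $\left(\begin{smallmatrix}\mathbf{0}^T\\ B_i\end{smallmatrix}\right)$, and one must check that the vertex $v$ transforms compatibly so that Proposition~\ref{prop-VCone} applies in the $(d+1)$-dimensional picture. Since $U$ is unimodular it induces a lattice-preserving linear isomorphism, so $\sigma$ transforms by the obvious substitution of variables and all the identities survive; the verification is routine but is the only place where anything needs to be said. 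Everything else is a direct citation of the three results above, so the proof is short.
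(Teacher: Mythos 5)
Your plan is structurally the same as the paper's — cite Lemma~\ref{lem-cone-2}, decompose via Theorem~\ref{theo-polytime-barv}, evaluate each unimodular piece via Proposition~\ref{prop-VCone} — but there is a genuine gap in your third step that the paper's proof exists precisely to handle.

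You assert that one may write $v=\sum_{j=1}^d k_{ij}\alpha_{ij}$ in terms of the unimodular generators and ``simply take $\gamma_i=\mathbf{0}$.'' This cannot be done. The generators $\alpha_{i1},\dots,\alpha_{id}$ all lie in the $d$-dimensional hyperplane $\{\,x\in\RR^{d+1}: fx_0+a_1x_1+\cdots+a_dx_d=0\,\}$ (this is the linear span of $C(H)$, and Barvinok's decomposition keeps every $C(A_i)$ inside that span), whereas $v=(b/f,0,\dots,0)^T$ satisfies $fx_0+\sum a_jx_j=b$. So for $b\neq 0$ the vertex $v$ does \emph{not} lie in the span of the $\alpha_{ij}$, and $v=\sum_j k_{ij}\alpha_{ij}$ has no solution. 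The integer vector $\gamma_i$ is not a notational redundancy that you can set to zero; it is the essential correction carrying the off-hyperplane component of $v$. Concretely, the paper writes $Uv=\binom{b}{v'}$, splits $v=\gamma_i+\hat v$ with $\gamma_i=U^{-1}\binom{b}{\mathbf 0}\in\ZZ^{d+1}$ and $\hat v=U^{-1}\binom{0}{v'}$ lying in the correct hyperplane, and only then expresses $\hat v$ (not $v$) in the $\alpha_{ij}$ basis. Relatedly, Proposition~\ref{prop-VCone} is stated for a \emph{full-dimensional} unimodular cone in $\RR^d$; the cones $C(A_i)\subset\RR^{d+1}$ here are $d$-dimensional, so the proposition cannot be applied directly in the ambient $(d+1)$-space as your third step suggests. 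The paper applies it to the full-dimensional cones $v'+C(B_i)\subset\RR^d$ obtained after the coordinate change, and then pulls the resulting exponents back via $U^{-1}$, which is exactly how the factor $\y^{\gamma_i}$ appears. You do mention the coordinate change in your final paragraph, but treat it as ``routine bookkeeping,'' when in fact it is where the nonzero $\gamma_i$ is forced and where Proposition~\ref{prop-VCone} becomes applicable. To repair the proposal, delete the claim that $\gamma_i$ may be taken to be $\mathbf 0$, and carry out the decomposition $v=\gamma_i+\hat v$ explicitly before invoking Proposition~\ref{prop-VCone} in the full-dimensional picture.
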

\begin{proof}
To apply Theorem \ref{theo-polytime-barv}, we need to transform $C(H)$ into a full dimensional cone.
To this end, find a unimodular matrix $U \in \ZZ^{(d+1) \times (d+1)}$ such that $U H = \left(\begin{array}{c}
\mathbf{0}^T \\
 H'
\end{array}\right)$, where $H' \in \ZZ^{d \times d}$. We additionally assume the $(1,1)$-entry of $U$ is nonnegative.
See Subsection \ref{ss-exam} on how to compute $U$.

We claim that $U v = \left(\begin{array}{c}
b \\
v'
\end{array}\right)$, where $v'\in \QQ^{d}$. To see this, denote by $U_1$ the first row of $U$.
Then $U_1$ must be primitive and $H^T U_1^T = \mathbf{0}$. It follows that
$U_1 = (f, a_1, \dots, a_d)$. The claim then follows.

Applying Proposition \ref{prop-VCone} and Theorem \ref{theo-polytime-barv} to $C(H')$ gives a unimodular cone decomposition of the form
\[
\sigma_{v' + C(H')}(\y) = \sum_{i\in I} \epsilon_i \sigma_{v' + C(B_i)}(\y)
= \sum_{i \in I} \epsilon_i \frac{\y^{\sum_{j=1}^d \lceil k_{ij} \rceil \beta_{ij}}}{\prod_{j=1}^{d} (1 - \y^{\beta_{ij}})},
\]
where $(k_{i1}, \dots, k_{id})^T = B_i^{-1} v' \in \QQ^d$.
To each $B_i$, let $A_i=U^{-1} \left(\begin{array}{c} \mathbf{0}^T \\ B_i\end{array}\right) $. Then we obtain the unimodular decomposition
\[
\sigma_{\hat v + C(H)}(\y) = \sum_{i\in I} \epsilon_i \sigma_{\hat v + C(A_i)}(\y)
= \sum_{i \in I} \epsilon_i \frac{\y^{\sum_{j=1}^d \lceil k_{ij} \rceil \alpha_{ij}}}{\prod_{j=1}^{d} (1 - \y^{\alpha_{ij}})},
\]
where $\hat v = U^{-1} \left(\begin{array}{c} 0 \\ v' \end{array} \right) = U^{-1} \left(\begin{array}{c} \mathbf{0}^T \\ B_i \end{array} \right) (k_{i1}, \dots, k_{id})^T = A_i (k_{i1}, \dots, k_{id})^T $.

Now set $\gamma_i = U^{-1} \left(\begin{array}{c}
b \\
\mathbf{0}
\end{array}\right) \in \ZZ^{d+1}$. Then we have
\[
v = U^{-1} U v = U^{-1} \left(\begin{array}{c}
b \\
v'
\end{array}\right)
= U^{-1} \left(\begin{array}{c} b \\ \mathbf{0} \end{array}\right) + U^{-1} \left(\begin{array}{c} 0 \\ v' \end{array}\right)
= \gamma_i + \hat v.
\]
Hence
\[
\sigma_{v + C(H)}(\y) = \sum_{i\in I} \epsilon_i \sigma_{v + C(A_i)}(\y)
= \sum_{i\in I} \epsilon_i \y^{\gamma_i} \cdot \sigma_{\hat v + C(A_i)}(\y)
= \sum_{i\in I} \epsilon_i \frac{\y^{\gamma_i + \sum_{j=1}^d \lceil k_{ij} \rceil \alpha_{ij}}}{\prod_{j=1}^{d} (1 - \y^{\alpha_{ij}})}.
\]
This completes the proof.
\end{proof}

\section{The computation of $E_m(\a; f, T)$}\label{s-computation}
In this section, we use cone decompositions to compute $E_m(\a;f,T)$. By \eqref{e-Emf-1}, we have
\begin{equation}\label{e-Emf-2}
E_m(\a; f, T) =\frac{ (-1)^{m+1}}{m!} [s^{-1-m}] \F(\a, f, T; s),
\end{equation}
where
\[
\F(\a,f,T;s)=\sum_{\zeta:\zeta^f=1} \frac{\zeta^{-T}}{\prod_{i=1}^{N+1} (1-\zeta^{a_i} \e^{a_is})}.
\]

Now we split $\F$ as follows.
\[
\F(\a,f,T;s) = \B(\a, f; s) \cdot \S(\a, f, T; s),
\]
where
\[
\B(\a, f; s) = \frac{1}{\prod_{i : f \mid a_i} (1-\e^{a_i s})}, \qquad
\S(\a, f, T; s) =\sum_{\zeta:\zeta^f=1} \frac{\zeta^{-T}}{\prod_{i: f \nmid a_i} (1-\zeta^{a_i} \e^{a_is})}.
\]
In the $f=1$ case, we treat $\S(\a, 1, T; s)=1$.

Suppose $\{i : f \nmid a_i\} = \{i_1,i_2, \dots, i_d\}$.
Then we must have $\gcd(f, a_{i_1}, \dots, a_{i_d}) = 1$, for otherwise $\gcd(\a) \neq 1$.
By Lemma~\ref{lem-cone-1}, we can write
\[
\S(\a, f, T; s) = f \CT_\lambda \frac{1}{\underline{1-\lambda^f}} \frac{\lambda^{-T}}{\prod_{\ell = 1}^d (1-\lambda^{a_{i_\ell}} y_\ell)} \Big|_{y_\ell=\e^{a_{i_\ell} s}}
= f \S'(\a, f, T; s) \Big|_{y_0=1, y_\ell=\e^{a_{i_\ell} s}},
\]
where
\[
\S'(\a, f, T; s) = \CT_\lambda \frac{1}{\underline{1-\lambda^f y_0}} \frac{\lambda^{-T}}{\prod_{\ell = 1}^d (1-\lambda^{a_{i_\ell}} y_\ell)}
\]
has a cone interpretation by Lemma~\ref{lem-cone-2}. Thus we may apply Lemma \ref{lem-cone-3} to obtain
\[
\S'(\a, f, T; s) = \sum_{i \in I} g_i(\y) = \sum_{i \in I} \epsilon_i \frac{\y^{\alpha_{i0}}}{\prod_{j=1}^{d} (1 - \y^{\alpha_{ij}})},
\]
where $I$ is the index set, $\epsilon_i \in \{1, -1\}$, $\alpha_{i1}, \dots, \alpha_{id}$ generate a unimodular cone for each $i$, and $\alpha_{i0}$ is determined by $\alpha_{i1}, \dots, \alpha_{id}$.
In detail, suppose
\[
\left(\begin{array}{c}
    \frac{1}{f} \\
    0 \\
    \vdots \\
    0
    \end{array}\right)
    = \gamma_i + \sum_{j=1}^{d} k_{ij} \alpha_{ij},
\]
where $\gamma_i \in \ZZ^{d+1}$, $k_{ij} \in \QQ$.
Then
\[
\alpha_{i0} = T \gamma_i + \sum_{j=1}^{d} \lceil k_{ij} T \rceil \alpha_{ij}
= T \gamma_i + \sum_{j=1}^{d} k_{ij} T \alpha_{ij} + \sum_{j=1}^{d} \{ -k_{ij} T \} \alpha_{ij}
= \left(\begin{array}{c}
    \frac{T}{f} \\
    0 \\
    \vdots \\
    0
    \end{array}\right) + \sum_{j=1}^{d} \{ - k_{ij} T \} \alpha_{ij}.
\]
Since we will set $y_0 = 1$, we may just take
\[
\alpha_{i0} = \sum_{j=1}^{d} \{ - k_{ij} T \} \alpha_{ij}.
\]
And then we have
\[
\S(\a, f, T; s) = f \sum_{i \in I} g_i(\y) \Big|_{y_0=1, y_\ell=\e^{a_{i_\ell} s}}.
\]

However, direct substitution $y_0=1$, $y_\ell=\e^{a_{i_\ell} s}$ might result in $1-\y^{\alpha_{ij}} \to 0$ in the denominator.
To avoid such situations, we add slack variables by setting $y_0=z_0$, $y_\ell=z_\ell q^{a_{i_\ell}}$ in $\S(\a, f, T; s)$ and take the limit at $z_\ell=1$ for all $\ell$,
and finally set $q=\e^s$.

This type of limit has been addressed in \cite{2015-Xin-CTEuclid}, and discussed further in \cite{2023-Xin-GTodd}. The idea is to first choose an integral vector $(c_0,\dots, c_d)$ and make the substitution $z_\ell = \kappa^{c_\ell}$ such that there is no zero in the denominator, i.e., none of $\y^{\alpha_{ij}}$ will become $1$.
This reduces the number of slack variables to $1$. Now we need to compute the limit at $\kappa=1$.

By letting $\kappa = \e^x$ we can compute separately the constant term of $\hat g_i := g_i |_{y_0=\e^{c_0 x}, y_\ell=\e^{c_\ell x} q^{a_{i_\ell}}} $ in $x$.
Each constant term is a rational function in $q$, denoted $\bar{g}_i(q)$.
Hence
\[
\S(\a, f, T; s) = f \sum_{i \in I} \bar{g}_i(\e^s).
\]
For each $i$, $\hat g_i$ has the structure:
\begin{equation}\label{e-gistru}
\hat g_i = \frac{q^{m_0} \e^{b_0 x}}{\prod_{b \in B_0} (1-\e^{bx})} \prod_{j=1}^{r} \frac{1}{\prod_{b \in B_j} (1-q^{m_j} \e^{bx})} \prod_{j=r+1}^{r+r'} \frac{1}{(1-q^{m_j})}
\end{equation}
where $m_0$ and $b_0$ might be symbolic, $m_j \in \QQ$ for $1\le j \le r+r'$, and $B_j \subset \QQ \setminus \{0\}$ are finite multi-sets for $0 \le j \le r$.

Xin et al. provided an algorithm, called Algorithm \texttt{CTGTodd}, to compute $\CT_x \hat g_i \pmod p$ for a suitable prime $p$
with a good complexity result. The algorithm also works over $\QQ$, but may involve the large integer problem, making it hard to do complexity analysis.
Applying Algorithm \texttt{CTGTodd} to \eqref{e-gistru} gives the following corollary.
See \cite[Section 4]{2023-Xin-GTodd} for details.
\begin{cor}\label{cor-CTGTodd}
Following all above notations, $\bar{g}_i(q)$ can be computed efficiently as a sum of at most $\binom{|B_0| + r + 1}{r + 1}$ simple rational functions in the following form:
\[
\bar{g}_i(q) = \CT_{x} \hat g_i = A \sum_{n=0}^{|B_0|} \frac{b_0^n}{n!} \sum_{l_1 + \cdots + l_r \le |B_0|-n \atop l_1,\dots,l_r \ge 0} C_{l_1,\dots, l_r}
\prod_{j=1}^{r} \Big(\frac{-1}{1 - q^{-m_j}}\Big)^{l_j},
\]
where
\[
A = \frac{(-1)^{|B_0|} q^{m_0}}{(\prod_{b \in B_0} b) \cdot (\prod_{j=1}^{r} (1-q^{m_j})^{|B_j|}) \cdot (\prod_{j=r+1}^{r+r'} (1 - q^{m_j}))}, \quad
C_{l_1,\dots, l_r} \in \QQ.
\]
Note that each term has a monomial numerator and denominator a product of at most $r'+\sum_{j=0}^{r} |B_j|$ binomials.
\end{cor}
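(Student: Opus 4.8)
The corollary is, in essence, a bookkeeping consequence of Algorithm \texttt{CTGTodd} of \cite{2023-Xin-GTodd}: the plan is to apply that algorithm to a function of the shape \eqref{e-gistru} and then track the precise form of the output. First I would separate the part of $\hat g_i$ that is free of $x$. Since the value of $(1-q^{m_j}\e^{bx})^{-1}$ at $x=0$ is $(1-q^{m_j})^{-1}\neq 0$, each such factor with $1\le j\le r$ is a unit in the relevant power series ring in $x$, and each $(1-q^{m_j})^{-1}$ with $r+1\le j\le r+r'$ is a constant. Hence the only pole in $x$ comes from $\prod_{b\in B_0}(1-\e^{bx})^{-1}$, which has pole order exactly $|B_0|=:N_0$, so that
\[
\CT_x\hat g_i=[x^0]\,\hat g_i=[x^{N_0}]\bigl(x^{N_0}\hat g_i\bigr),
\]
with $x^{N_0}\hat g_i$ analytic at $x=0$.

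Next I would rewrite the $x$-dependent factors so that all $q$-dependence is displayed explicitly. For the pole part I use
\[
\frac{1}{1-\e^{bx}}=\frac{-1}{bx}\cdot\frac{bx}{\e^{bx}-1},
\]
where $\frac{bx}{\e^{bx}-1}$ is a power series in $x$ over $\QQ$ with constant term $1$ (the generating function of the Bernoulli numbers); thus $\prod_{b\in B_0}(1-\e^{bx})^{-1}=(-1)^{N_0}x^{-N_0}\bigl(\prod_{b\in B_0}b\bigr)^{-1}G_0(x)$ with $G_0\in\QQ[[x]]$, $G_0(0)=1$. For the factors with $1\le j\le r$, set $w_j:=\frac{q^{m_j}}{1-q^{m_j}}=\frac{-1}{1-q^{-m_j}}$; one checks the identity
\[
\frac{1}{1-q^{m_j}\e^{bx}}=(1+w_j)\sum_{k\ge0}w_j^{k}(\e^{bx}-1)^k,
\]
in which $(1+w_j)=(1-q^{m_j})^{-1}$, so that multiplying over $b\in B_j$ produces exactly the power $(1-q^{m_j})^{-|B_j|}$ to be absorbed into $A$, while $(\e^{bx}-1)^k$ has $x$-valuation at least $k$.

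Collecting the $x$-free constants $q^{m_0}$, $\bigl(\prod_{b\in B_0}b\bigr)^{-1}$, $\prod_{j=1}^r(1-q^{m_j})^{-|B_j|}$ and $\prod_{j=r+1}^{r+r'}(1-q^{m_j})^{-1}$, together with the sign $(-1)^{N_0}$, into the prefactor $A$, we are reduced to
\[
\CT_x\hat g_i=A\cdot[x^{N_0}]\Bigl(\e^{b_0x}\,G_0(x)\prod_{j=1}^r\prod_{b\in B_j}\bigl(\sum_{k\ge0}w_j^{k}(\e^{bx}-1)^k\bigr)\Bigr).
\]
Expanding $\e^{b_0x}=\sum_{n\ge0}\frac{b_0^n}{n!}x^n$, the power $x^n$ taken from this factor yields the outer sum $\sum_{n=0}^{N_0}\frac{b_0^n}{n!}$, and the remaining $x^{N_0-n}$ is supplied by $G_0$ and the groups indexed by $j$; because $(\e^{bx}-1)^k$ has $x$-valuation $\ge k$, the total exponent $l_j$ of $w_j$ contributed by the $j$-th group satisfies $l_1+\cdots+l_r\le N_0-n$. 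The coefficient of $\prod_j w_j^{l_j}$ is then a rational number $C_{l_1,\dots,l_r}$ depending only on the multi-sets $B_0,\dots,B_r$ — all $q$-dependence sits in $A$ and in the $w_j$, all $b_0$-dependence in the first sum — which is precisely the claimed formula. Counting tuples $(n,l_1,\dots,l_r)$ of nonnegative integers with $n+l_1+\cdots+l_r\le N_0$ gives $\binom{N_0+r+1}{r+1}=\binom{|B_0|+r+1}{r+1}$ terms; each term is a monomial over a product of at most $r'+\sum_{j=0}^r|B_j|$ binomials once $(1-q^{-m_j})^{l_j}$ is merged with $(1-q^{m_j})^{|B_j|}$ from $A$ via $1-q^{-m_j}=-q^{-m_j}(1-q^{m_j})$ and $\sum_j l_j\le|B_0|$. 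Efficiency of the computation is inherited from \cite[Section~4]{2023-Xin-GTodd}.

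The step I expect to be the real content, rather than bookkeeping, is the last one: certifying that after all the substitutions the coefficients $C_{l_1,\dots,l_r}$ are genuinely free of $q$ (and of the symbolic data $b_0,m_0$), and that the valuation estimates really force $l_1+\cdots+l_r\le|B_0|-n$ and not merely some weaker bound. This requires care about which expansions are carried out in which ring and in which order; once that ordering is fixed correctly, the rest is collecting factors into $A$ and a routine count.
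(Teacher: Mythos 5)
Your argument is correct, and it supplies a derivation that the paper itself omits: the paper simply cites Algorithm \texttt{CTGTodd} and \cite[Section~4]{2023-Xin-GTodd} without reproving the corollary. Your reconstruction matches the stated result exactly — the factorization $\frac{1}{1-\e^{bx}} = \frac{-1}{bx}\cdot\frac{bx}{\e^{bx}-1}$ isolates the prefactor $A$ and the $\QQ$-power series $G_0$; the identity $\frac{1}{1-q^{m_j}\e^{bx}}=(1-q^{m_j})^{-1}\sum_{k\ge 0}w_j^k(\e^{bx}-1)^k$ with $w_j=\frac{-1}{1-q^{-m_j}}$ is the key expansion; the valuation bound $\sum_j l_j\le |B_0|-n$ follows from $(\e^{bx}-1)^k$ having $x$-order $\ge k$; and the tuple count $\binom{|B_0|+r+1}{r+1}$ is right. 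One small remark: the coefficients you call $C_{l_1,\dots,l_r}$ do in fact depend on $n$ as well (through the target exponent $N_0-n$), a dependence the corollary's notation suppresses; your proof makes this visible, and it is not a flaw — just worth stating explicitly. The worry you flag at the end about $q$-freeness of the $C$'s is already resolved by your own setup: once all $q$-dependence is pushed into $A$ and the $w_j$'s, the remaining coefficient extraction is over $\QQ[[x]]$ with data from $B_0,\dots,B_r$ only.
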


By expanding $\bar{g}_i(q) $ as a sum according to Corollary \ref{cor-CTGTodd} for each $i$,
set $q=\e^s$, and then multiply each term by $\B(\a, f; s)$. We obtain
\begin{equation}\label{e-FeqfsumGi}
\F(\a, f, T; s) = f \sum_{i \in I'} G_i(s).
\end{equation}
Now each $G_i(s)$ has the structure:
\begin{equation}\label{e-Gistru}
G_i(s) = \frac{c \cdot \e^{b_0 s}}{\prod_{j=1}^{u} (1 - \e^{b_j s})},
\end{equation}
where $c$ and $b_0$ might be symbolic, $b_j \in \QQ \setminus \{0\}$ for $1\le j \le u$.

Therefore \eqref{e-Emf-2} becomes
\begin{equation}\label{e-Emf-Gi}
E_m(\a; f, T) = \frac{(-1)^{m+1} f}{m!} \sum_{i \in I'} [s^{-1-m}] G_i(s).
\end{equation}
It is trivial that $[s^{-1-m}] G_i(s) = 0$ when $u < m+1$. Indeed, we have
\[
[s^{-1-m}] G_i(s) = [s^{-1-m+u}] \frac{c}{\prod_{j=1}^{u} b_j} \e^{b_0 s} \prod_{j=1}^{u}  \frac{b_j s}{1 - \e^{b_j s}}
= [s^{-1-m+u}] \frac{c}{\prod_{j=1}^{u} b_j} \e^{b'_0 s} \hat{G}_i(s),
\]
where $b'_0 = b_0-\frac{\sum_{j=1}^{u} b_j}{2}$, and
\begin{equation}\label{e-hatGi}
\hat{G}_i(s) = \e^{\frac{\sum_{j=1}^{u} b_j}{2} s} \prod_{j=1}^{u}  \frac{b_j s}{1 - \e^{b_j s}}
\end{equation}
is easily checked to be a power series in $s^2$.
Write $\hat{G}_i(s) = \sum_{k \ge 0} M_k s^k$, where $M_k=0$ for odd $k$.
Xin et al. proved \cite[Theorem 19]{2023-Xin-GTodd} that the sequence $(M_0, M_1, \dots, M_n) \pmod p$ can be computed in time $O(n \log (n) + n u )$. Again, the result holds over $\QQ$, but the complexity should be modified.
Then we have the following result.
\begin{theo}\label{theo-Emf}
Following all above notations, we have
\begin{equation}\label{e-Gi}
[s^{-1-m}] G_i(s) = \begin{cases}
                      \displaystyle \frac{c}{\prod_{j=1}^{u} b_j} \sum_{k=0}^{-1-m+u} \frac{{b'_0}^k}{k!} M_{-1-m+u-k}, & \mbox{if } u \ge m+1;\\
                      0, & \mbox{if } u < m+1.
                    \end{cases}
\end{equation}
Then $E_m(\a; f, T)$ can be obtained directly by substituting the above into \eqref{e-Emf-Gi}.
Furthermore, $E_m(\a; T)$ is computable by Proposition \ref{prop-EmeqsummuEmf}.
\end{theo}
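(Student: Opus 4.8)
The plan is to establish \eqref{e-Gi} by a direct extraction of the Laurent coefficient, then assemble the pieces already built in the preceding development. Starting from the structure \eqref{e-Gistru} of $G_i(s)$, I would write
\[
G_i(s) = \frac{c}{\prod_{j=1}^{u} b_j}\,\e^{b'_0 s}\,\hat{G}_i(s),
\qquad
b'_0 = b_0 - \frac{1}{2}\sum_{j=1}^{u} b_j,
\]
exactly as in the displayed computation preceding the theorem, where $\hat{G}_i(s)$ is the product in \eqref{e-hatGi}. The key structural fact is that each factor $\dfrac{b_j s}{1-\e^{b_j s}}\cdot \e^{b_j s/2}$ is an even power series in $s$ with constant term $1$ (this is the standard symmetry $x/(1-\e^x) + x/2$ is odd, so multiplied by the shift it is even); hence $\hat{G}_i(s)=\sum_{k\ge 0} M_k s^k$ is a genuine power series in $s^2$, and in particular $\hat G_i$ has no pole at $s=0$. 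Consequently $G_i(s)$ has a pole at $s=0$ of order exactly $u$, so the Laurent coefficient $[s^{-1-m}]G_i(s)$ vanishes whenever $-1-m < -u$, i.e.\ when $u < m+1$; this is the second case of \eqref{e-Gi}.

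For the case $u \ge m+1$, I would extract the coefficient by convolving the two series $\e^{b'_0 s} = \sum_{k\ge 0} \frac{(b'_0)^k}{k!} s^k$ and $\hat G_i(s) = \sum_{\ell\ge 0} M_\ell s^\ell$: since $G_i(s) = \frac{c}{\prod_j b_j}\, s^{-u}\bigl(\sum_k \frac{(b'_0)^k}{k!}s^k\bigr)\bigl(\sum_\ell M_\ell s^\ell\bigr)$, the coefficient of $s^{-1-m}$ is the coefficient of $s^{-1-m+u}$ in the product of the two power series, namely $\sum_{k=0}^{-1-m+u} \frac{(b'_0)^k}{k!} M_{-1-m+u-k}$, giving the first case of \eqref{e-Gi}. (Here I use $u\ge m+1$ precisely to guarantee $-1-m+u\ge 0$ so that the index range is nonempty and all $M$-indices are nonnegative.) The computability of the finite sequence $M_0,\dots,M_n$ is quoted from \cite[Theorem 19]{2023-Xin-GTodd}, so nothing further is needed there.

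Finally, the assembly: substituting \eqref{e-Gi} into \eqref{e-Emf-Gi} yields $E_m(\a;f,T)$ as a finite explicit expression, and then Proposition \ref{prop-EmeqsummuEmf} recovers $E_m(\a;T) = \sum_{f\in\f_m}\mu_m(f) E_m(\a;f,T)$ from these; this is purely a matter of citing results already proved, so it costs nothing. I do not anticipate a serious obstacle — the only point requiring genuine care is the verification that $\hat G_i(s)$ is a power series in $s^2$ (hence regular at $0$ with the right pole-order bookkeeping), which pins down when the coefficient vanishes and justifies the finite upper limit $-1-m+u$ in the sum; everything else is formal Laurent-series manipulation already sketched in the text immediately before the theorem statement.
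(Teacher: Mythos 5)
Your proposal is correct and follows essentially the same route as the paper: factor out $s^{-u}$, shift $b_0 \to b'_0$ to make $\hat G_i(s)$ an even power series regular at $s=0$, and read off the Laurent coefficient by convolution, with the pole-order observation giving the vanishing case $u<m+1$. One small slip: your first display $G_i(s)=\frac{c}{\prod b_j}\e^{b'_0 s}\hat G_i(s)$ omits the factor $s^{-u}$, though you restore it correctly in the subsequent line, so the argument goes through.
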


As mentioned in Section \ref{s-reduction}, for a fixed $f$,
we can compute $E_m(\a; f, T)$ simultaneously for all $m \in \m(f)$ since there must be the same $\F(\a, f, T; s)$, hence the same $\{G_i(s) : i \in I'\}$.
This will be completed by the following algorithm.

\begin{algorithm}[H]
\DontPrintSemicolon
\KwInput{$\a$, $f$, $\m(f)$.}
\KwOutput{$E_m(\a; f, T)$ for all $m \in \m(f)$.}

Compute $\F(\a, f, T; s) = f \sum_{i \in I'} G_i(s)$ as \eqref{e-FeqfsumGi}.

For each $G_i(s)$ as in \eqref{e-Gistru}, compute
\[
\hat{G}_i(s) \pmod {\langle s^{u-\min \m(f)} \rangle} =
\sum_{k=0}^{u-\min \m(f)-1} M_k s^k,
\]
where $\hat{G}_i(s)$ is as in \eqref{e-hatGi}.
Then compute $[s^{-1-m}] G_i(s)$ by \eqref{e-Gi} for all $m \in \m(f)$.

Compute $E_m(\a; f, T)$ for all $m \in \m(f)$ by \eqref{e-Emf-Gi}.
\caption{}\label{alg-Emf}
\end{algorithm}

\begin{exam}
Let $\a = (2,3,3,6)$, $\m = \{0,1,2,3\}$.
By Examples \ref{ex-fset} and \ref{ex-Morb},
\begin{align*}
E_0(\a; T) &= E_0(\a; 6, T), &
E_1(\a; T) &= -E_1(\a; 1, T) + E_1(\a; 2, T) + E_1(\a; 3, T), \\
E_2(\a; T) &= E_2(\a; 3, T), &
E_3(\a; T) &= E_3(\a; 1, T).
\end{align*}
Thus to obtain an explicit formula of $E(\a; t)$ as shown in Example \ref{exam-2336}, it suffices to compute $E_m(\a; f, T)$ for $f=1,2,3,6$ and $m\in \m(f)$.

We only give the details for $f=3$. The other cases are similar.
We have $\m(3) = (1, 2)$, and $\F(\a, 3, T; s) = G_1(s) = \displaystyle \frac{\e^{6 \{\frac{2T}{3}\} s}}{(1-\e^{3s})^2 (1-\e^{6s})^2}$.
We first compute
\[
\hat{G}_1(s) \pmod {\langle s^3 \rangle} =
\e^{9s} \frac{324 s^4}{(1-\e^{3s})^2 (1-\e^{6s})^2} \pmod {\langle s^3 \rangle} = 1 - \frac{15}{4} s^2.
\]
Then by Theorem \ref{theo-Emf},
\begin{align*}
E_1(\a; 3, T) &= \frac{(-1)^2 \cdot 3}{1!} \cdot \frac{1}{324} \cdot \Big(-\frac{15}{4} + \frac{(6 \{\frac{2T}{3}\} - 9)^2}{2!}\Big) = \frac{49}{144} - \frac{\{\frac{2T}{3}\}}{2} + \frac{\{\frac{2T}{3}\}^2}{6}, \\
E_2(\a; 3, T) &= \frac{(-1)^3 \cdot 3}{2!} \cdot \frac{1}{324} \cdot \Big(6 \Big\{\frac{2T}{3}\Big\} - 9\Big) = \frac{1}{24} - \frac{\{\frac{2T}{3}\}}{36}.
\end{align*}
\end{exam}

\section{Summary of the algorithm and computer experiments}\label{s-experiments}
We first summarize the main steps to give the following algorithm. The algorithm is implemented as the \texttt{Maple} package \texttt{CT-Knapsack}.

\begin{algorithm}[H]
\DontPrintSemicolon
\KwInput{A positive integer sequence $\a$ with $\gcd(\a)=1$. \\
        \hspace{3.2em} A (symbolic) nonnegative integer $T$. \\
        \hspace{3.2em} The subscript set $\m$ of $E_m(\a; T)$ that we want to compute.}
\KwOutput{$E_m(\a; T)$ for all $m \in \m$.}

Apply Algorithm \texttt{fset} to $\a$ and obtain $\f_m$ for each $m \in \m$.

Apply Algorithm \texttt{M\"obius} to each $\f_m$ to obtain $\mu_m(f)$ for all $f\in \f_m$.

For each $f \in \cup_{m \in \m} \f_m$ satisfying $\m(f) \neq \varnothing$, using Algorithm \ref{alg-Emf} to compute $E_m(\a; f, T)$ for all $m\in \m(f)$.

Compute $E_m(\a; T)$ for all $m\in \m$ using Proposition \ref{prop-EmeqsummuEmf}.

\caption{Algorithm \texttt{CT-Knapsack}}\label{alg-CTKnap}
\end{algorithm}

Then we make some computer experiments on $\a$ in Table \ref{table-instance}. Their running time is reported in Table \ref{table-runningtime}, which also includes the running time by Baldoni et al.'s packages \texttt{M-Knapsack} and \texttt{LattE Knapsack} for comparison. In these experiments, we terminate our procedure
if the running time is longer than $20$ minutes.

Table \ref{table-instance} contains ten selected instances in \cite[Table 1]{2015-Baldoni-Denumerant}, nine small random sequences and six large random sequences.
Here ``small" and ``large" mean that $1 \le a_i \le 20$ and $10000 \le a_i \le 100000$ respectively.
Table \ref{table-runningtime} shows the running time (in seconds) for computing all coefficients of $E(\a; t)$ by \texttt{CT-Knapsack}, \texttt{M-Knapsack} and \texttt{LattE Knapsack} on the same personal laptop. From the table, we see that \texttt{CT-Knapsack} has exactly a significant speed advantage over \texttt{M-Knapsack}. This is due to the following reasons.
\begin{enumerate}
  \item In Algorithm \texttt{fset}, we avoid plenty of $\gcd$ computations.

  \item In Algorithm \texttt{M\"obius}, we avoid repeated computations of $\mu_m(f)$.

  \item For each $f$, we only compute $E_m(\a;f,T)$ for $m=\min \m(f)$. The other cases are read off. This avoids
  repeated unimodular cone decompositions and limit computations.

  \item For limit computations, we use the recent results in \cite{2023-Xin-GTodd}.
\end{enumerate}

These advantages can be adapted to give rise a \texttt{C++} implementation of our algorithm. Such an implementation
can be expected to be faster than \texttt{LattE Knapsack}.

Finally, we remark that the three packages in comparison are designed for computing top coefficients of $E(\a; t)$.
They are not suitable for computing the whole quasi-polynomial $E(\a; t)$.
One example is Problem Sel9: it takes \texttt{CT-Knapsack} $0.703$ seconds to compute the top $8$ coefficients, but more than $20$ minutes to compute
the whole quasi-polynomial;
Another example is Problem Sma7: the whole quasi-polynomial is of degree $11$. \texttt{CT-Knapsack} computes the top $10$ coefficients within $7.266$ seconds,
but compute the single top $11$-th and $12$-th coefficients in $119.562$ 
and $553.547$ 
seconds, respectively.

Sills and Zeilberger's \texttt{Maple} package \texttt{PARTITIONS} can compute $p_k(t)$ for $k$ up to $70$ (See \cite{2012-Zeilberger} for details). This corresponds to compute the whole $E(\a; t)$ in the $\a=(1,2,\dots, k)$ case.
However, it already takes $83.4$ seconds for \texttt{CT-Knapsack} to compute $p_{12}(t)$. The computation of
$p_{70}(t)$ is out of reach even for \texttt{LattE Knapsack}.

The reason is that for bottom coefficients,
we may need to decompose some high dimensional cones, while Barvinok's algorithm is only polynomial when the dimension is fixed.
For instance, if we want to obtain $E_0(\a; 71, T)$, where $\a=(1,2,\dots, 71)$, then
\begin{equation}\label{e-dim70}
 \S'(\a, 71, T; s) = \CT_{\lambda} \frac{1}{\underline{1 - \lambda^{71} y_0}} \frac{\lambda^{-T}}{\prod_{i=1}^{70} (1 - \lambda^i y_i)}
\end{equation}
corresponds to a simplicial cone of dimension $70$.

\begin{small}
\begin{center}
\begin{table}[htp]
\caption{Instances.}\label{table-instance}
  \begin{tabular}{ll}
    \toprule
    Problem &  $\a$  \\
    \midrule
    Sel1 & $(8,12,11)$ \\
    Sel2 & $(5,13,2,8,3)$ \\
    Sel3 & $(5,3,1,4,2)$ \\
    Sel4 & $(9,11,14,5,12)$ \\
    Sel5 & $(9,10,17,5,2)$ \\
    Sel6 & $(1,2,3,4,5,6)$ \\
    Sel7 & $(12223,12224,36674,61119,85569)$\\
    Sel8 & $(12137,24269,36405,36407,48545,60683)$ \\
    Sel9 & $(20601,40429,40429,45415,53725,61919,64470,69340,78539,95043)$ \\
    Sel10 & $(5,10,10,2,8,20,15,2,9,9,7,4,12,13,19)$\\
    \midrule
    Sma1 & $(11, 9, 5, 3, 14, 10)$ \\
    Sma2 & $(2, 19, 20, 19, 4, 11, 12)$ \\
    Sma3 & $(18, 10, 5, 2, 4, 18, 19, 5)$ \\
    Sma4 & $(12, 18, 4, 2, 20, 6, 7, 16, 11)$ \\
    Sma5 & $(7, 13, 7, 12, 17, 19, 8, 6, 5, 14)$ \\
    Sma6 & $(6, 6, 4, 18, 16, 8, 15, 8, 11, 15, 3)$\\
    Sma7 & $(18, 20, 11, 19, 14, 18, 15, 8, 10, 14, 12, 9)$ \\
    Sma8 & $(16, 6, 18, 11, 13, 17, 9, 20, 13, 12, 5, 6, 18)$ \\
    Sma9 & $(20, 1, 14, 20, 17, 6, 14, 6, 11, 6, 2, 19, 3, 15)$\\
    \midrule
    Lar1 & $(75541, 29386, 12347)$ \\
    Lar2 & $(66958, 75047, 71820, 69631)$ \\
    Lar3 & $(36723, 52533, 37999, 86519, 15860)$ \\
    Lar4 & $(71273, 66058, 97201, 48161, 60355, 10311)$ \\
    Lar5 & $(13913, 16811, 21299, 75411, 57053, 64181, 28990)$ \\
    Lar6 & $(87394, 47494, 43580, 46684, 93526, 50784, 55902, 90475)$ \\
    \bottomrule
  \end{tabular}
\end{table}
\end{center}
\end{small}

\begin{small}
\begin{center}
\begin{table}[h!]
\caption{Running time (in seconds) for computing all coefficients of $E(\a; t)$.}\label{table-runningtime}
  \begin{tabular}{lccc}
    \toprule
    Problem & \texttt{CT-Knapsack} & \texttt{M-Knapsack} & \texttt{LattE Knapsack}  \\
    \midrule
    Sel1 & 0 & 0.141 & 0  \\
    Sel2 & 0.328 & 1.922 & 0 \\
    Sel3 & 0.172 & 1.422 & 0 \\
    Sel4 & 0.203 & 4.906 & 0.02 \\
    Sel5 & 0.187 & 5.000 & 0.01 \\
    Sel6 & 0.172 & 11.094 & 0.02 \\
    Sel7 & 0.844 & 14.125 & 0.04\\
    Sel8 & 5.218 & 293.484 & 0.19  \\
    Sel9 & $> 20$ min & $> 20$ min &  87.46\\
    Sel10 & $> 20$ min & $> 20$ min & 66.25 \\
    \midrule
    Sma1 & 0.813 & 39.109 & 0.05  \\
    Sma2 & 3.406 & $> 20$ min & 0.23 \\
    Sma3 & 11.859 & $> 20$ min & 0.30 \\
    Sma4 & 17.828 & $> 20$ min & 0.83 \\
    Sma5 & 76.625 & $> 20$ min & 2.30 \\
    Sma6 & 165.094 & $> 20$ min & 4.01 \\
    Sma7 & 721.031 & $> 20$ min & 13.00 \\
    Sma8 & $> 20$ min & $> 20$ min &  19.22 \\
    Sma9 & $> 20$ min & $> 20$ min & 28.19 \\
    \midrule
    Lar1 & 0.187 & 0.219 & 0  \\
    Lar2 & 2.547 & 15.265 & 0.05 \\
    Lar3 & 26.797 & 925.625 & 0.55 \\
    Lar4 & 390.281 & $> 20$ min & 4.94 \\
    Lar5 & $> 20$ min & $> 20$ min & 66.40 \\
    Lar6 & $> 20$ min & $> 20$ min & 390.89 \\
    \bottomrule
  \end{tabular}
\end{table}
\end{center}
\end{small}

\newpage
\section{Concluding remark}\label{s-cr}
This work is along the line of Baldoni et al.'s polynomial algorithm on top coefficients of Sylvester's denumerant.
We establish an algebraic combinatorial approach, avoid plenty of repeated computations, and develop the package \texttt{CT-Knapsack}
to implement our ideas.
We are planning to give a \texttt{C++} implementation in the near future.
The implementation will not use Barvinok's idea, but use its parallel development in an upcoming paper \cite{2023-Xin-knapsackcone}.
The new algorithm, called \texttt{DecDenu}, is specially designed for denumerant cones. Though it is not proved to be polynomial, its practical running time is much faster than Algorithm \ref{alg-barv} in average.
We hope there will be a decomposition result similar to Theorem \ref{theo-polytime-barv}, and hence giving rise a \texttt{C++} implementation that is much faster than \texttt{LattE Knapsack}.

Another direction is to study the whole coefficients when the entries of $\a$ are not large.
In our framework, the major obstacle is the constant term as in \eqref{e-dim70}.
The corresponding $A_{i_0}(\lambda)$ in \eqref{e-siglecontri} has been addressed in \cite{2004-Xin-Ell} and can be computed by the command \texttt{E\_frac\_single} in the \texttt{Maple} package \texttt{Ell}. The result
is a rational function with a simple denominator, but the numerator has as many as $71^{70}$ terms.
However, what we need is only the specialization at $y_i=q^i$. Then the $71^{70}$ terms collapse to a two
variable polynomial of a reasonable degree. Of course, this basic idea needs further work to be realized.

\medskip
\noindent \textbf{Acknowledgments:}
This work was supported by the National Natural Science Foundation of China (No. 12071311).

\end{document}